\newtheorem{thm}{Theorem}[section]
\newtheorem{cor}[thm]{Corollary}
\newtheorem{lem}[thm]{Lemma}
\newtheorem{prop}[thm]{Proposition}
\theoremstyle{definition}
\newtheorem{definition}[thm]{Definition}
\newtheorem*{quest}{Question}
\renewcommand{\epsilon}{\varepsilon}
\renewcommand{\phi}{\varphi}
\newcommand{\defeq}{\mathrel{\mathop:}=}
\DeclareMathOperator{\rad}{rad}
\DeclareMathOperator{\soc}{soc}
\DeclareMathOperator{\tp}{top}
\DeclareMathOperator{\Hom}{Hom}
\newcommand{\ol}{\overline}
\begin{document}


\setlist{noitemsep}

\author{Friedrich Martin Schneider}
\thanks{F.M.S.\ acknowledges funding of the Excellence Initiative by the German Federal and State Governments.}
\address{F.M.S., Institut f\"ur Algebra, TU Dresden, 01062 Dresden, Germany}
\email{martin.schneider@tu-dresden.de}

\author{Jens Zumbr\"agel}
\address{J.Z., Faculty of Computer Science \& Mathematics, Universit\"at Passau, 94032 Passau, Germany}
\email{jens.zumbraegel@uni-passau.de}

\title{MacWilliams' extension theorem for infinite rings}
\date{\today}

\keywords{Quasi-Frobenius ring; Code equivalence}

\begin{abstract}
	Finite Frobenius rings have been characterized as precisely those finite rings satisfying the MacWilliams extension property, by work of Wood.  In the present note we offer a generalization of this remarkable result to the realm of Artinian rings.  Namely, we prove that a left Artinian ring has the left MacWilliams property if and only if it is left pseudo-injective and its finitary left socle embeds into the semisimple quotient.  Providing a topological perspective on the MacWilliams property, we also show that the finitary left socle of a left Artinian ring embeds into the semisimple quotient if and only if it admits a finitarily left torsion-free character, if and only if the Pontryagin dual of the regular left module is almost monothetic.  In conclusion, an Artinian ring has the MacWilliams property if and only if it is finitarily Frobenius, i.e., it is quasi-Frobenius and its finitary socle embeds into the semisimple quotient.
\end{abstract}


\maketitle



\section*{Introduction}

Quasi-Frobenius rings, i.e., rings which are Artinian and self-injective, belong to those classical Artinian rings which have been a driving force in the development of modern ring and module theory.  Introduced by Nakayama~\cite{nakayama}, one of their many equivalent characterizations identifies them as those rings~$R$ for which the $\Hom( -, R )$ functor provides a duality between its finitely generated left modules and its finitely generated right modules.  They also appear as the smallest categorical generalization of group rings of finite groups.

Within the class of quasi-Frobenius rings, those rings~$R$ for which the socle $\soc R$ is isomorphic, as one-sided module, to the semisimple quotient ring $R /\! \rad R$, are the Frobenius rings.  They emerge naturally as a generalization of Frobenius algebras, i.e., finite-dimensional algebras over a field that admit a non-degenerate balanced bilinear pairing.

In more recent years, with the advent of ring-linear coding theory (see, for example, \cite{ring-codes}), the interest in finite ring theory has increased.  One of the striking results in this regard is the characterization, due to Wood~\cite{wood1, wood2}, of finite Frobenius rings as precisely those rings~$R$ which satisfy the following MacWilliams extension property: every Hamming weight preserving isomorphism between left submodules of $R^n$ extends to a monomial transformation, i.e., is of the form $(x_i) \mapsto ( x_{\sigma i} u_i )$ for a permutation $\sigma \in S_n$ and invertible ring elements $u_i \in R$.  This property has been established by MacWilliams~\cite{macwilliams} in the case of finite fields and consolidates the notion of code equivalence.  A further remarkable result is the observation by Honold~\cite{honold} that in the finite case Frobenius rings can be characterized as those rings~$R$ satisfying a one-sided condition $\soc R \cong R /\! \rad R$, even without assuming the ring to be quasi-Frobenius.

In the present note we offer a generalization of Wood's characterization of finite Frobenius rings as rings satisfying the MacWilliams extension property to the realm of general infinite (Artinian) rings.  We remark that the case of infinite Artin algebras has recently been treated by Iovanov~\cite{iovanov}.

While each (two-sided) MacWilliams ring is necessarily quasi-Frobenius, it turns out that the Frobenius property is in general too strong to be deduced from satisfying MacWilliams' extension theorem.  We therefore weaken the Frobenius property to a criterion which we call \emph{finitarily Frobenius}, which merely requires that the finitary socle embeds into the semisimple quotient (either as left or right module).  Clearly, every Frobenius ring is also finitarily Frobenius.  In our main result, Theorem~\ref{theorem:main}, we show that a left Artinian ring satisfies the MacWilliams extension property for left modules if and only if it is left pseudo-injective and the finitary left socle embeds into the semisimple quotient.  It follows that an Artinian ring is finitarily Frobenius if and only if it satisfies MacWilliams' extension property.

Our approach is based on the description of Frobenius rings in terms of generating characters, an idea developed by Wood~\cite{wood1} and adapted by Iovanov~\cite{iovanov}.  In fact, our proof method relies on the existence of certain torsion-free characters on finitarily Frobenius rings, as well as results on Pontryagin duality of discrete and compact abelian groups.  Along the way, we show for a left Artinian ring that the finitary left socle embeds into the semisimple quotient if and only if it admits a finitarily left torsion-free character, if and only if the Pontryagin dual of the regular left module is almost monothetic.

\section{Frobenius rings and generalizations}\label{section:frobenius.rings}

We compile in this section a few notions from ring and module theory as needed in the present context and introduce the notion of a finitarily Frobenius ring.  For a comprehensive account on the classical theory we refer to~\cite{lam, anderson-fuller}, see also~\cite{wood1, honold}.

In the following, the term \emph{ring} will always mean unital ring.  Recall that a ring is said to be \emph{quasi-Frobenius} if it is left Artinian and left self-injective, i.e., injective as a left module.  As it turns out, the properties left Artinian and left self-injective can each be replaced by its right counterparts, and the Artinian property by Noetherian (cf.~\cite[Sec.~15]{lam}).

We shall, for a (left or right) module~$M$, denote by $\soc M$ the sum of all its minimal submodules, by $\rad M$ the intersection of all maximal ones and by $\tp M \defeq M /\! \rad M$ its “top quotient”.  Accordingly, we denote by $\rad R$ the \emph{Jacobson radical} of a ring~$R$, i.e., the intersection of all maximal left (right) ideals; also, let $\soc({}_R R)$ be its \emph{left socle}, i.e., the sum of all minimal left ideals, and let $\soc(R_R)$ be its analogously defined \emph{right socle}.  A crucial notion for the present note is the Frobenius property.

\begin{definition}\label{def:frob} A ring~$R$ is called \emph{Frobenius} if it is quasi-Frobenius and satisfies \[ \text{(i)} \ \soc({}_R R) \cong {}_R (R /\! \rad R) \quad \text{ and/or } \quad \text{(ii)} \ \soc(R_R) \cong (R /\! \rad R)_R \,. \] \end{definition}

For quasi-Frobenius rings the conditions (i) and (ii) are actually equivalent.  Indeed, it is worthwhile to recall how the properties of quasi-Frobenius and Frobenius may be expressed with respect to the principal decomposition, as we outline briefly below (for details, see~\cite[Sec.~16]{lam} or~\cite[Sec.~31]{anderson-fuller}).  Let~$R$ be a left or right Artinian ring and let $S \defeq R /\! \rad R$ be its semisimple quotient.  Then there is a list of orthogonal primitive idempotents $e_1, \dots, e_n \in R$ such that \[ R = R e_1 \oplus \ldots \oplus R e_n \quad \text{ and } \quad R = e_1 R \oplus \ldots \oplus e_n R \] are direct sums of indecomposable left and right modules, respectively, and, letting $\ol e_i \defeq e_i + \rad R \in S$ for all $i \in \{ 1, \dots, n \}$, one has decompositions \[ S = S \ol e_1 \oplus \ldots \oplus S \ol e_n \quad \text{ and } \quad S = \ol e_1 S \oplus \ldots \oplus \ol e_n S \] into simple left and right modules, respectively.  For all $i, j \in \{ 1, \dots, n \}$ there holds \[ S \ol e_i \cong S \ol e_j \ \Longleftrightarrow\  R e_i \cong R e_j \ \Longleftrightarrow\ e_i R \cong e_j R \ \Longleftrightarrow\  \ol e_i S \cong \ol e_j S \,, \] and we may assume that $R e_1, \dots, R e_m$ (for some $m \le n$) form a complete set of non-isomorphic representatives for all $R e_i$.  Then $S \ol e_1, \dots, S \ol e_m$ (and $\ol e_1 S, \ldots, \ol e_m S$) form an irredundant set of representatives for all simple left (right) modules.  We shall refer to $e_1, \dots, e_m$ as a \emph{basic set} of idempotents for the ring~$R$.  It is easy to see that $\tp(R e_i) \cong S \ol e_i$ and $\tp(e_i R) \cong \ol e_i S$ (considered as $R$-modules), in particular, the former are simple.

Now if the ring~$R$ is quasi-Frobenius then each of $\soc (R e_i)$ and $\soc (e_i R)$ is also simple.  In fact, the following characterization is valid (see, e.g., \cite[Cor.~31.4]{anderson-fuller}), which actually corresponds to Nakayama's original definition of quasi-Frobenius rings~\cite{nakayama}.

\begin{thm}\label{theorem:nakayama} Let~$R$ be a left or right Artinian ring with a basic set of idempotents $e_1, \dots, e_m$.  Then the ring~$R$ is quasi-Frobenius if and only if there is a permutation $\pi \in S_m$ such that \[ \soc(R e_i) \cong \tp(R e_{\pi(i)}) \quad \text{ and } \quad \soc(e_{\pi(i)} R) \cong \tp(e_i R) \,. \] \end{thm}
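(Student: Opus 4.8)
The plan is to prove both implications, after first reducing to the case that $R$ is left Artinian. If $R$ is only right Artinian, we apply this case to the opposite ring $R^{\mathrm{op}}$: it is left Artinian, it is quasi-Frobenius exactly when $R$ is (recall that being left Artinian and left self-injective is equivalent to its pair of right-hand counterparts), and a routine check shows that the hypothesis for $R$ with permutation $\pi$ translates into the hypothesis for $R^{\mathrm{op}}$ with permutation $\pi^{-1}$, the two displayed families of isomorphisms being interchanged. So assume $R$ is left Artinian, keep the notation of the text — $S \defeq R /\! \rad R$, a complete set $e_1, \dots, e_n$ of orthogonal primitive idempotents with basic subset $e_1, \dots, e_m$, and $\ol e_i \defeq e_i + \rad R$ — and recall that the modules $\tp(R e_i) \cong S \ol e_i$ and $\tp(e_i R) \cong \ol e_i S$ are simple.

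Suppose first that $R$ is quasi-Frobenius. Then ${}_R R$ is injective, so every direct summand $R e_i$ is injective as well, and, being indecomposable with simple (hence essential) socle — as recorded above for quasi-Frobenius rings — each $R e_i$ equals the injective envelope $R e_i \cong E(\soc R e_i)$ of its socle. Over the left Artinian ring $R$ every indecomposable injective left module is the injective envelope of a simple module (every nonzero module contains a simple submodule, as its nonzero cyclic submodules are Artinian), so $E(S \ol e_1), \dots, E(S \ol e_m)$ is a complete, irredundant list of the indecomposable injective left modules; as $R e_1, \dots, R e_m$ are $m$ pairwise non-isomorphic indecomposable injectives, they must be exactly this list. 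Hence $\pi \colon i \mapsto k$, where $\soc R e_i \cong S \ol e_k$, is a permutation of $\{1, \dots, m\}$ and $\soc R e_i \cong S \ol e_{\pi(i)} \cong \tp R e_{\pi(i)}$. For the second family we invoke the duality $D \defeq \Hom_R(-, {}_R R)$ between finitely generated left and right $R$-modules, which is available precisely because $R$ is quasi-Frobenius: it is exact, satisfies $D(R e_i) \cong e_i R$, and, being a contravariant equivalence, sends simple modules to simple modules and satisfies $D(\soc M) \cong \tp(D M)$ and $D(\tp M) \cong \soc(D M)$ for every finitely generated module $M$ (it reverses composition series). Applying $D$ to $\soc R e_i \cong \tp R e_{\pi(i)}$ then gives $\tp(e_i R) \cong \soc(e_{\pi(i)} R)$, the second family with the same permutation~$\pi$.

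Conversely, suppose a permutation $\pi \in S_m$ satisfying the two displayed families is given. The first family tells us that $\soc R e_i \cong \tp R e_{\pi(i)}$ is simple, hence essential in $R e_i$, so $R e_i$ embeds as an essential submodule of $E_i \defeq E(\soc R e_i) = E(S \ol e_{\pi(i)})$; since $\pi$ is a bijection, $E_1, \dots, E_m$ is a complete, irredundant list of the indecomposable injective left modules, and the second family gives the mirror picture on the right. It then suffices to prove that each embedding $R e_i \hookrightarrow E_i$ is onto, i.e.\ that every indecomposable projective left module is injective; granting this, ${}_R R$ is a finite direct sum of the $R e_i$, hence injective, and $R$ is quasi-Frobenius (and, a posteriori, right Artinian). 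To obtain these equalities we would follow the argument of \cite[Sec.~31]{anderson-fuller}: reformulate the left self-injectivity of $R$ as a double-annihilator condition on one-sided ideals — Baer's criterion, in the form $\mathrm{l.ann}\,\mathrm{r.ann}(L) = L$ for left ideals $L$, used together with its right-hand counterpart $\mathrm{r.ann}\,\mathrm{l.ann}(J) = J$ for right ideals $J$ — and then deduce this from the permutation data by playing the two one-sided hypotheses against each other at the level of composition multiplicities, the bijections on isomorphism classes of simple modules induced on either side by taking socles of indecomposable projectives forcing the chains of annihilators in Baer's criterion to have matching lengths.

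The main obstacle is exactly this last step of the converse. Over a ring known only to be left Artinian, the injective envelope $E(S \ol e_j)$ of a simple left module can fail to be finitely generated — this already happens, for instance, for the ring of upper triangular matrices $\left( \begin{smallmatrix} F & F \\ 0 & K \end{smallmatrix} \right)$ attached to an infinite field extension $K \subsetneq F$ — so one cannot conclude $R e_i = E_i$ by a naive comparison of composition lengths, and it is precisely the right-hand family of isomorphisms that rescues the argument by controlling the left injective envelopes. By contrast, the forward implication is comparatively soft once the duality $\Hom_R(-, {}_R R)$ for quasi-Frobenius rings is taken as known.
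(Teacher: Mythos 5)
The paper does not prove this theorem but cites it from the literature (as [AF92, Cor.~31.4]), so there is no internal argument to compare your proposal against. Your forward implication is essentially correct: quasi-Frobenius forces $R$ to be two-sided Artinian, each $R e_i$ is then an indecomposable injective whose socle is simple and essential, so matching the $m$ pairwise non-isomorphic $R e_i$ against the $m$ indecomposable injectives $E(S \ol e_j)$ yields the permutation $\pi$; the self-duality $\Hom_R(-, {}_R R)$ then transports the left-hand family of isomorphisms to the right-hand family. This is a clean and valid route, granted the standard duality theory for quasi-Frobenius rings.

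The converse, however, is not actually proved. You correctly reduce it to showing that each essential embedding $R e_i \hookrightarrow E(\soc R e_i)$ is onto, and you rightly identify the obstruction --- over a ring that is only left Artinian the injective envelope $E(\soc R e_i)$ need not be finitely generated, so a na\"ive comparison of composition lengths cannot close the argument --- but the resolution is left as a pointer to [AF92, Sec.~31] together with a vague sketch invoking double-annihilator conditions and ``composition multiplicities''. Since this is exactly the content of the theorem in the hard direction, the proposal has a genuine gap at the crux: one must show that the two permutation families jointly force the double-annihilator conditions (every left ideal is the left annihilator of its right annihilator, and dually for right ideals), from which left self-injectivity follows via Baer's criterion and quasi-Frobenius then follows in conjunction with left Artinian. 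Without carrying out that derivation, the converse is a citation rather than an argument, and the proposal should be explicit that this is where it is deferring to the reference.
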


The permutation $\pi \in S_m$ in Theorem~\ref{theorem:nakayama} is referred to as the \emph{Nakayama permutation}.  Notice that for any fixed~$j$ the number~$\mu_j$ of indecomposables $R e_i$ isomorphic to $R e_j$ equals the number of simples $\tp(R e_i)$ isomorphic to $\tp(R e_j)$, and coincides with its right counterpart.  Hence, for a quasi-Frobenius ring~$R$, Theorem~\ref{theorem:nakayama} yields that \[ \soc({}_R R) = \textstyle\bigoplus\limits_{i=1}^n \soc(R e_i) \cong \textstyle\bigoplus\limits_{i=1}^n \tp(R e_i) = \tp({}_R R) \] if and only if $\mu_{\pi(i)} = \mu_i$ for all $i \in \{ 1, \dots, n \}$, which in turn is equivalent to $\soc(R_R) \cong \tp(R_R)$.  This shows the equivalence of condition (i) and (ii) of Definition~\ref{def:frob} for quasi-Frobenius rings.  (On the other hand, any Artinian ring~$R$ satisfying both $\soc({}_R R) \cong \tp({}_R R)$ and $\soc(R_R) \cong \tp(R_R)$ is necessarily quasi-Frobenius.)

We are going to introduce a finitary version of the Frobenius property. Given a ring $R$, we define its \emph{finitary left socle} $\soc^{\ast}({}_R R)$ to be the sum of all finite minimal left ideals of~$R$, and its \emph{finitary right socle} $\soc^{\ast}(R_R)$ as the sum of all finite minimal right ideals of~$R$.

\begin{prop}\label{proposition:finitary.frobenius} Let~$R$ be a quasi-Frobenius ring. Then $\soc^{\ast}({}_R R)$ embeds into ${}_R (R /\! \rad R)$ if and only if $\soc^{\ast}(R_R)$ embeds into $(R /\! \rad R)_R$. \end{prop}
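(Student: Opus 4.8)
The plan is to reduce both of the displayed embedding conditions to the same combinatorial statement about the Nakayama permutation. Fix a basic set of idempotents, relabelled $e_1,\dots,e_m$, and let $Re_1,\dots,Re_n$ be the complete list of indecomposable projective left $R$-modules; for a basic index $k$ write $T_k \defeq \tp(Re_k)$ and $T_k' \defeq \tp(e_kR)$ for the corresponding simple left, resp.\ right, module, let $\mu_k$ be the multiplicity of $Re_k$ (equivalently, of $e_kR$) among those $n$ indecomposables, and let $\pi\in S_m$ be the Nakayama permutation of Theorem~\ref{theorem:nakayama}. Since $R$ is Artinian, ${}_RR$ has finite length, so every $\mu_k$ is a finite natural number. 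Put $D_k \defeq \mathrm{End}_R(T_k)$, a division ring; then $T_k$ and $T_k'$ are finite exactly when $D_k$ is finite (both are nonzero finite-dimensional vector spaces over the division ring $e_kRe_k/\!\rad(e_kRe_k)$, to which $D_k$ is isomorphic or anti-isomorphic). Set $F \defeq \{\, k : D_k\text{ is finite}\,\}$.

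The first and decisive step is to show that $F$ is $\pi$-invariant. Since $R$ is quasi-Frobenius, $Re_k$ is an indecomposable injective left module whose socle $\soc(Re_k)\cong\tp(Re_{\pi(k)})=T_{\pi(k)}$ is simple, so $Re_k$ is an injective hull of $T_{\pi(k)}$. Restriction of endomorphisms to the (fully invariant) socle yields a ring homomorphism $\mathrm{End}_R(Re_k)\to\mathrm{End}_R(T_{\pi(k)})=D_{\pi(k)}$, and it is surjective: any $\psi\in\mathrm{End}_R(T_{\pi(k)})$, composed with the inclusion $T_{\pi(k)}\hookrightarrow Re_k$, extends over that inclusion, by injectivity of $Re_k$, to an endomorphism of $Re_k$ whose restriction to the socle is $\psi$. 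The kernel of this surjection is a proper two-sided ideal of the local ring $\mathrm{End}_R(Re_k)$ with division-ring quotient, hence equals its Jacobson radical; since $\mathrm{End}_R(Re_k)/\!\rad\cong\mathrm{End}_R(T_k)=D_k$ because $Re_k$ is a projective cover of $T_k$, we obtain $D_k\cong D_{\pi(k)}$, and therefore $\pi(F)=F$.

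With this in hand the two computations are parallel. By Theorem~\ref{theorem:nakayama}, $\soc(Re_i)\cong T_{\pi(k)}$ whenever $Re_i\cong Re_k$ with $k$ basic; summing over the $n$ indecomposable summands of ${}_RR$ gives $\soc({}_RR)\cong\bigoplus_{k=1}^m T_k^{\,\mu_{\pi^{-1}(k)}}$, whereas ${}_R(R/\!\rad R)=\tp({}_RR)\cong\bigoplus_{k=1}^m T_k^{\,\mu_k}$. The finite minimal left ideals of $R$ are exactly the simple submodules of $\soc({}_RR)$ isomorphic to some $T_k$ with $k\in F$, so $\soc^{\ast}({}_RR)$ is the corresponding isotypic part $\bigoplus_{k\in F}T_k^{\,\mu_{\pi^{-1}(k)}}$. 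As an embedding of semisimple modules of finite length is equivalent to domination of all isotypic multiplicities, $\soc^{\ast}({}_RR)$ embeds into ${}_R(R/\!\rad R)$ if and only if $\mu_{\pi^{-1}(k)}\le\mu_k$ for every $k\in F$; since $\pi^{-1}$ permutes $F$, summing these inequalities over $k\in F$ equates the two sides and forces equalities, so this condition is equivalent to $\mu_{\pi(k)}=\mu_k$ for all $k\in F$. The second half of Theorem~\ref{theorem:nakayama}, namely $\soc(e_{\pi(k)}R)\cong\tp(e_kR)=T_k'$, gives in the identical fashion $\soc^{\ast}(R_R)\cong\bigoplus_{k\in F}(T_k')^{\,\mu_{\pi(k)}}$ and $(R/\!\rad R)_R\cong\bigoplus_{k=1}^m(T_k')^{\,\mu_k}$, so $\soc^{\ast}(R_R)$ embeds into $(R/\!\rad R)_R$ if and only if $\mu_{\pi(k)}\le\mu_k$ for every $k\in F$, which by the same summation over the $\pi$-invariant set $F$ is once more equivalent to $\mu_{\pi(k)}=\mu_k$ for all $k\in F$. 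The two one-sided conditions thus coincide, which is precisely the assertion.

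The main obstacle is the $\pi$-invariance of $F$ established in the second paragraph: if the division ring $D_k=\mathrm{End}_R(\tp(Re_k))$ were not preserved along the Nakayama permutation, the set of ``finite'' simple modules need not be $\pi$-invariant, the summation trick would break down, and the two one-sided inequalities would fail to collapse to the same equality. The injective-hull argument, which genuinely uses self-injectivity, is what secures this; everything else is standard bookkeeping with the principal decomposition together with the elementary fact about embeddings of semisimple modules of finite length.
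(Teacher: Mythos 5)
Your proof is correct and reduces the problem to the same combinatorial condition as the paper ($\mu_{\pi(k)} = \mu_k$ for all $k$ in the $\pi$-invariant ``finite'' index set $F$), but you establish the crucial $\pi$-invariance of $F$ by a genuinely different route. The paper deduces that $\soc(Re_i)$, $\tp(Re_i)$, $\soc(e_iR)$, $\tp(e_iR)$ are simultaneously finite from the quasi-Frobenius duality $\Hom(\soc(Re_i),R) \cong \tp(e_iR)$ and $\Hom(\tp(e_iR),R) \cong \soc(Re_i)$, together with the observation that $\Hom(T,R)$ is finite whenever $T$ is a finite simple module (its image lies in $\soc^{\ast}(R)$), plus the Artin-Wedderburn column/row matching. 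You instead prove the sharper fact $D_k \cong D_{\pi(k)}$ outright, by recognizing $Re_k$ as both the injective hull of its simple socle $T_{\pi(k)}$ and the projective cover of its simple top $T_k$, and identifying both division rings with the residue division ring of the local endomorphism ring $\mathrm{End}_R(Re_k)$. Both arguments use self-injectivity in an essential way at precisely this point; yours is more self-contained in that it avoids citing the named QF duality theorem and delivers an isomorphism of the relevant division rings rather than merely simultaneity of finiteness. You are also more explicit than the paper about the closing bookkeeping — the summation over the $\pi$-invariant set $F$ that collapses the one-sided inequalities $\mu_{\pi^{-1}(k)}\le\mu_k$ (resp.\ $\mu_{\pi(k)}\le\mu_k$) to equalities — which the paper only gestures at by reference back to its earlier non-finitary discussion following Theorem~\ref{theorem:nakayama}.
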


\begin{proof} Let $e_1, \dots, e_m$ be a basic set of idempotents for the ring~$R$. First we observe that $\soc(R e_i)$ is finite if and only if $\tp(e_i R)$ is finite.  Indeed, since~$R$ is quasi-Frobenius we have $\Hom(\soc(R e_i), R) \cong \tp(e_i R)$ and $\Hom(\tp(e_i R), R) \cong \soc(R e_i)$ (see~\cite[Cor.~16.6]{lam} or~\cite[Cor.~2.5]{wood1}).  Furthermore, if~$T$ is any finite simple module, then $\Hom(T, R)$ is finite, since every homomorphism $T \to R$ maps into the finite set $\soc^{\ast}(R)$.

Next it is easy to see that $\tp(R e_i) \cong S \ol e_i$ is finite if and only if $\ol e_i S \cong \tp(e_i R)$ is finite; in fact they are isomorphic to the standard column and row modules of the same matrix ring in the Artin-Wedderburn decomposition of $S \defeq R /\! \rad R$.  This shows that $\soc(R e_i)$, $\tp(R e_i)$, $\soc(e_i R)$, $\tp(e_i R)$ are, for each~$i$, simultaneously either finite or infinite.

Now denoting by~$F$ the set of all~$i$ such that $\soc(R e_i)$ is finite, we see from Theorem~\ref{theorem:nakayama} that the Nakayama permutation~$\pi$ preserves the set~$F$, and from the subsequent discussion that $\soc^{\ast}({}_R R)$ embeds into $\tp({}_R R)$ if and only if $\mu_{\pi(i)} = \mu_i$ for all $i \in F$, which holds if and only if $\soc^{\ast}(R_R)$ embeds into $\tp(R_R)$. \end{proof}

In view of Proposition~\ref{proposition:finitary.frobenius}, we record the following definition.

\begin{definition}\label{def:finitely.frobenius} A ring~$R$ is called \emph{finitarily Frobenius} if it is quasi-Frobenius and there holds one (thus each) of the following equivalent conditions: \begin{enumerate}[label=(\roman*)] \item $\soc^*({}_R R)$ embeds into ${}_R (R /\! \rad R)$. \item $\soc^{\ast}(R_R)$ embeds into $(R /\! \rad R)_R$. \end{enumerate} \end{definition}

The following notion will also be relevant for the MacWilliams extension property.  A left module ${}_R M$ is said to be \emph{pseudo-injective} if for every submodule~$N$ of~$M$ and any injective homomorphism $f \colon N \to M$ there is an endomorphism $g \colon M \to M$ with $g|_N = f$.  Accordingly, a ring~$R$ is called \emph{left (right) pseudo-injective} if for every left (right) ideal~$I$, each injective homomorphism $I \to R$ is given by a right (left) multiplication by an element of~$R$.  Dinh and L\'opez-Permouth have shown~\cite[Prop.~3.2]{DinhLopezA} that a finite ring is left pseudo-injective if and only if it satisfies the MacWilliams property for codes of length one.

Clearly, every injective module is pseudo-injective and every quasi-Frobenius ring is left and right pseudo-injective.  There is considerable interest in such weaker forms of self-injectivity, one motivation being to discuss more general assumptions on rings that imply quasi-Frobenius.  In particular, a ring~$R$ is termed \emph{left (right) min-injective} if for every minimal left (right) ideal~$I$ each homomorphism $I \to R$ is given by right (left) multiplication; see, e.g., \cite{harada, nicholson-yousif}.  Note that left (right) pseudo-injectivity implies left (right) min-injectivity for rings.  Pseudo-injective modules gained attention more recently, as they are characterized as modules that are invariant under automorphisms of the injective envelope~\cite{ErSinghSrivastava}.

Let us record the following result.

\begin{prop}\label{prop:quasi-Frobenius.pseudo-injective} An Artinian ring is quasi-Frobenius if and only if it is both left and right pseudo-injective. \end{prop}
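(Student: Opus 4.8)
I would dispatch the forward implication at once: a quasi-Frobenius ring is left and right self-injective by the symmetry recalled above, and injective modules are pseudo-injective. For the converse, let $R$ be Artinian and left and right pseudo-injective; I would fix a basic set $e_1,\dots,e_m$ inside a full orthogonal primitive decomposition $1 = e_1 + \dots + e_n$ as in the discussion preceding Theorem~\ref{theorem:nakayama}, set $S = R/\rad R$, and aim to verify the Nakayama criterion.

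The first, one-sided, block of the argument runs as follows. Left pseudo-injectivity implies left min-injectivity, so $R$ is a semiperfect left min-injective ring with nonzero socles $\soc(Re)$; I would quote the standard consequence (see~\cite{harada} or~\cite{nicholson-yousif}) that $\soc(Re)$ is simple for every primitive idempotent $e$, and dually $\soc(eR)$ is simple. Then I would show $\soc(Re_i) \cong \soc(Re_j)$ implies $Re_i \cong Re_j$: extend an isomorphism $\soc(Re_i) \to \soc(Re_j) \hookrightarrow R$ to an endomorphism of ${}_RR$ by left pseudo-injectivity; its restriction to $Re_i$ is injective, since its kernel meets the essential simple socle trivially; its image has socle $\soc(Re_j)$, so post-composing with the projection of ${}_RR$ onto the summand $Re_j$ — injective for the same reason — gives an embedding $Re_i \hookrightarrow Re_j$; symmetrically $Re_j \hookrightarrow Re_i$, whence $Re_i \cong Re_j$, since injective endomorphisms of Artinian modules are automorphisms. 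Therefore $i \mapsto [\soc(Re_i)]$ is injective on $\{1,\dots,m\}$, hence a bijection onto the $m$ classes of simple left modules, so there is $\pi \in S_m$ with $\soc(Re_i) \cong \tp(Re_{\pi(i)})$; symmetrically there is $\sigma \in S_m$ with $\soc(e_jR) \cong \tp(e_{\sigma(j)}R)$.

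The remaining step — coupling the two sides by proving $\sigma = \pi^{-1}$ — is where the proof really happens. My plan is to show $\soc({}_RR) = \soc(R_R)$, call this common two-sided ideal $T$, and exploit its $(R,R)$-bimodule structure: refining the left and right decompositions gives $T = \bigoplus_{k,l} e_k T e_l$, and a direct computation yields $T e_l = \soc(Re_l)$ and $e_k T = \soc(e_kR)$. Since $\soc(Re_l)$ is simple as a left module, hence a simple module over a single Artin–Wedderburn block of $S$ — the one indexed by $\pi(j)$, where $j$ is the basic class of $e_l$ — the term $e_k T e_l = e_k\,\soc(Re_l)$ is nonzero exactly when $e_k$ has class $\pi(j)$; reading $e_k T e_l = \soc(e_kR)\,e_l$ symmetrically on the right, it is nonzero exactly when $e_l$ has class $\sigma(j')$, with $j'$ the class of $e_k$. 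As $T e_l = \soc(Re_l) \neq 0$ forces some $e_k T e_l \neq 0$, the class of $e_l$ equals $\sigma\bigl(\pi(\text{class of }e_l)\bigr)$, and letting $e_l$ range over all primitive idempotents gives $\sigma\pi = \mathrm{id}$. Then $\soc(Re_i) \cong \tp(Re_{\pi(i)})$ and $\soc(e_{\pi(i)}R) \cong \tp(e_iR)$, and Theorem~\ref{theorem:nakayama} yields that $R$ is quasi-Frobenius.

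The hard part is exactly the equality $\soc({}_RR) = \soc(R_R)$: everything before it is genuinely one-sided, whereas this coincidence is where the two pseudo-injectivity hypotheses must interact. If it is not available off the shelf for left and right min-injective (or pseudo-injective, Artinian) rings, I would derive it by hand — a plausible starting point being the observation, which follows from pseudo-injectivity, that $Ra = Rb$ whenever $a$ and $b$ have the same right annihilator, together with its left–right dual — and this is the technical core I would budget the most effort for.
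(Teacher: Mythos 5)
Your forward direction matches the paper's, but for the converse the paper simply reduces to min-injectivity and invokes Harada's theorem (\cite[Thm.~13]{harada}, also \cite[Thm.~3.12]{iovanov}) as a black box, whereas you unpack that citation into a direct verification of the Nakayama criterion. Your argument is sound in structure: socle simplicity and the equality $\soc({}_RR)=\soc(R_R)$ are indeed standard for two-sided min-injective Artinian rings (for the latter, right min-injectivity forces $\soc(R_R)\subseteq\soc({}_RR)$ because any $R$-isomorphism $kR\to akR$ inverts by a left multiplication, yielding $k\in Rak$; the dual gives the other inclusion), your pseudo-injectivity argument that $\soc(Re_i)\cong\soc(Re_j)\Rightarrow Re_i\cong Re_j$ is correct and cleanly exploits essentiality of the simple socle plus the fact that injective endomorphisms of Artinian modules are automorphisms, and the Peirce-decomposition computation coupling $\pi$ and $\sigma$ via $e_kTe_l$ is a nice, self-contained way to force $\sigma=\pi^{-1}$. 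One attributional slip worth flagging: in the Nicholson--Yousif/Harada theory it is \emph{right} min-injectivity (together with an essential right socle) that yields $\soc(Re)$ simple, and \emph{left} min-injectivity that yields $\soc(eR)$ simple, not the reverse as you write; since you assume both sides this does not damage the proof, but the lemma you would quote is stated with the sides crossed. What the paper's citation buys is brevity; what your version buys is self-containedness and an explicit view of where each of the two one-sided hypotheses actually enters, which is pedagogically valuable even if not needed for the paper's purposes.
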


\begin{proof} Since pseudo-injectivity implies min-injectivity, the result is a direct consequence of~\cite[Thm.~13]{harada}; see also~\cite[Thm.~3.12]{iovanov}. \end{proof}

The following useful observation is implicit in~\cite[Cor.~3.5]{iovanov}, and for the reader's convenience we include a direct argument based on work of Bass~\cite{bass}, along the lines of \cite[Prop.~5.1]{wood1} and \cite[Lem.~3.3]{iovanov}.

\begin{lem}\label{lemma:bass} Let~$R$ be a left or right Artinian ring which is left pseudo-injective.  If~${}_R M$ is a left $R$-module and $g, h \colon M \to R$ are homomorphisms such that $\ker g = \ker h$, then there exists a unit $u \in R$ such that $h(x) = g(x) u$ for all $x \in M$. \end{lem}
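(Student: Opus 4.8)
The plan is to prove this in two stages. First, use left pseudo-injectivity to realise $h$ as $g$ followed by right multiplication by \emph{some} ring element; second, invoke a stable-range argument of Bass to replace that element by a unit, while arranging the correction so that it does not disturb the values on $g(M)$.

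For the first stage, put $K \defeq \ker g = \ker h$. Since $g$ and $h$ have the same kernel, the assignment $g(x) \mapsto h(x)$ is a well-defined left $R$-linear isomorphism $\phi \colon g(M) \to h(M)$ between the left ideals $g(M)$ and $h(M)$ of~$R$ (well-definedness and injectivity are exactly the statement $\ker g = \ker h$, and $R$-linearity is inherited from $g$ and $h$). Composing $\phi$ with the inclusion $h(M) \hookrightarrow R$ produces an injective homomorphism from the left ideal $g(M)$ into $R$, so left pseudo-injectivity supplies an element $u_0 \in R$ with $\phi(a) = a u_0$ for all $a \in g(M)$; equivalently, $h(x) = g(x) u_0$ for every $x \in M$. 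Applying the same argument to $\phi^{-1} \colon h(M) \to g(M)$ (again an injective homomorphism of a left ideal into $R$) yields $v_0 \in R$ with $g(x) = h(x) v_0$ for all $x \in M$.

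For the second stage, combining the two relations gives $g(x)(u_0 v_0 - 1) = 0$ for all $x \in M$, so the element $a \defeq u_0 v_0 - 1$ lies in $\Ann_R(g(M))$, which is a right ideal of~$R$. From $u_0 v_0 - a = 1$ one reads off $u_0 R + a R = R$. Now I would use that a left or right Artinian ring is semiperfect, hence has stable range one by the work of Bass~\cite{bass}; applied to the coprime pair $(u_0, a)$ this produces $t \in R$ such that $u \defeq u_0 + a t$ is a unit of~$R$. Since $a$ annihilates $g(M)$ on the right and $\Ann_R(g(M))$ is a right ideal, the difference $u - u_0 = a t$ again lies in $\Ann_R(g(M))$, whence $g(x) u = g(x) u_0 = h(x)$ for all $x \in M$, as required.

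I do not anticipate a genuine obstacle: the content of the lemma is carried entirely by the two external inputs, namely left pseudo-injectivity (to obtain $u_0$ and $v_0$) and the stable-range-one property of semiperfect rings (to upgrade $u_0$ to a unit). The only point demanding care is to organise the correction so that the new multiplier still reproduces $h$ on all of $g(M)$, and this is precisely why the correction term is taken inside $\Ann_R(g(M))$ by writing $u = u_0 + at$ with $a = u_0 v_0 - 1$.
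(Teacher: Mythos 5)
Your proof is correct and follows essentially the same two-stage strategy as the paper: left pseudo-injectivity applied to the left ideals $g(M)$ and $h(M)$ produces right multipliers $u_0,v_0$ with $h(x)=g(x)u_0$ and $g(x)=h(x)v_0$, and then Bass's stable-range-one result for semilocal rings upgrades $u_0$ to a unit $u=u_0+(u_0v_0-1)t$ for which $g(x)u=g(x)u_0=h(x)$ still holds. The paper phrases the second stage as a direct appeal to Bass's Lemma~6.4 via the decomposition $R=aR+(1-ab)R$ rather than the annihilator-ideal bookkeeping you use, but the two formulations are the same argument.
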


\begin{proof} Consider the induced injective maps $\widetilde g, \smash{\widetilde h} \colon M / N \to R$, where $N \defeq \ker g = \ker h$.  Letting $I \defeq \mathrm{im}\, \widetilde g$ we have an injective homomorphism $f \defeq \smash{\widetilde h} \circ \widetilde g^{-1} \colon I \to R$, thus by pseudo-injectivity there exists $a \in R$ with $f(z) = z a$ for all $z \in I$, which implies $h(x) = g(x) a$ for all $x \in M$.  Similarly, we find $b \in R$ such that $g(x) = h(x) b$ for all $x \in M$.  Now since $R = a b R + (1 - a b) R \subseteq a R + (1 - a b) R$ and $R /\! \rad R$ is semisimple, it follows from~\cite[Lem.~6.4]{bass} that there is a unit $u \in R$ such that $u = a + (1 - a b) r$ for some $r \in R$.  Thus $g(x) u = g(x) a + g(x) (1 - a b) r = g(x) a = h(x)$ for all $x \in M$, as desired. \end{proof}

\section{Torsion-free characters}\label{section:characters}

In this section we show that any Frobenius ring admits a left (resp., right) torsion-free character and that, similarly, every finitarily Frobenius ring admits a finitarily left (resp., right) torsion-free character, cf.~Definition~\ref{definition:torsion-free}.  Let us start with a fairly general setting.

\begin{definition}\label{definition:torsion-free} Let~$R$ be a ring and let~$E$ be an abelian group.  A homomorphism $\chi \colon {R \to E}$ is called \emph{left torsion-free} (resp., \emph{right torsion-free}) if the subgroup $\ker \chi$ contains no nonzero left (resp., right) ideals.  The homomorphism~$\chi$ is called \emph{torsion-free} if it is both left- and right torsion-free.  Furthermore, $\chi \colon {R \to E}$ is said to be \emph{finitarily left torsion-free} (resp., \emph{finitarily right torsion-free}) if the subgroup $\ker \chi$ contains no nonzero finite left (resp., right) ideals; and~$\chi$ is called \emph{finitarily torsion-free} if it is both finitarliy left- and right torsion-free. \end{definition}

Our construction of torsion-free characters on Frobenius rings depends upon the celebrated Artin-Wedderburn theorem.  We start with the case of division rings.

\begin{lem}\label{lemma:torsion-free.division} Every division ring~$D$ admits a torsion-free homomorphism into $\mathbb Q / \mathbb Z$. \end{lem}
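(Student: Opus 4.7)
The plan is to exploit the fact that a division ring has no nontrivial one-sided ideals, which collapses the torsion-free condition down to a single non-triviality condition on $\chi$.

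First I would observe that the only left ideals of $D$ are $\{0\}$ and $D$ itself, and likewise for right ideals. Consequently, a group homomorphism $\chi \colon D \to \mathbb{Q}/\mathbb{Z}$ is left torsion-free (resp.\ right torsion-free) if and only if $\ker \chi \ne D$, that is, $\chi$ is not identically zero. In particular, any nonzero additive homomorphism from $D$ to $\mathbb{Q}/\mathbb{Z}$ is automatically torsion-free.

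Next I would exhibit such a nonzero homomorphism. Choose the nonzero element $1 \in D$ and consider the cyclic subgroup $\langle 1\rangle$ of the additive group of $D$. Depending on the characteristic of $D$, this subgroup is isomorphic either to $\mathbb{Z}$ (in characteristic $0$) or to $\mathbb{Z}/p\mathbb{Z}$ (in characteristic $p$); in both cases there is an obvious nonzero homomorphism $\langle 1\rangle \to \mathbb{Q}/\mathbb{Z}$, namely $1 \mapsto \tfrac{1}{2} + \mathbb{Z}$ in the first case and $1 \mapsto \tfrac{1}{p} + \mathbb{Z}$ in the second. Since $\mathbb{Q}/\mathbb{Z}$ is a divisible abelian group, hence an injective object in the category of abelian groups, this partial homomorphism extends to a group homomorphism $\chi \colon D \to \mathbb{Q}/\mathbb{Z}$ defined on all of $D$. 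By construction $\chi(1) \ne 0$, so $\chi$ is nonzero and therefore torsion-free by the observation above.

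There is essentially no obstacle here; the argument is entirely a reformulation of the fact that $\mathbb{Q}/\mathbb{Z}$ is an injective cogenerator of the category of abelian groups, combined with the triviality of the ideal lattice of a division ring. The only subtlety worth flagging is that one must verify both the \emph{left} and \emph{right} torsion-free properties, but both reduce to the same condition $\chi \ne 0$ in this setting, so no further work is required.
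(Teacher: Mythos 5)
Your proof is correct and follows essentially the same route as the paper: exploit that a division ring has no nontrivial one-sided ideals, so torsion-freeness of $\chi$ collapses to $\chi \ne 0$, and then produce such a $\chi$. The only difference is that the paper invokes the cogenerator property of $\mathbb{Q}/\mathbb{Z}$ as a black box (citing Lam), whereas you unpack that property by explicitly constructing a nonzero map on $\langle 1\rangle$ and extending by injectivity — i.e., you reprove the relevant instance of the cogenerator property rather than cite it.
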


\begin{proof} The $\mathbb Z$-module $\mathbb Q / \mathbb Z$ has the cogenerator property, i.e., for any abelian group~$X$ and every nonzero $x \in X$ there is a homomorphism $f \colon X \to \mathbb Q / \mathbb Z$ with $f(x) \ne 0$ (cf.~\cite[Lem.~4.7]{lam}).  In particular, there exists a nonzero homomorphism $\chi \colon D \to \mathbb Q / \mathbb Z$, which clearly must be torsion-free as~$D$ does not admit any non-trivial left or right ideals. \end{proof}

Given a ring $R$ and some integer $n \geq 1$, we consider the \emph{matrix ring} $\mathrm{M}_{n}(R) \defeq R^{n\times n}$ and the \emph{trace map} $\mathrm{tr} \colon \mathrm{M}_{n}(R) \to R, \, m \mapsto \sum_{i=1}^{n} m_{ii}$, which is an $R$-bimodule homomorphism.

\begin{lem}\label{lemma:torsion-free.matrix} Let $R$ be a ring and let~$E$ be an abelian group.  If a homomorphism $\chi \colon R \to E$ is left torsion-free (resp., right torsion-free), then so is $\chi \circ \mathrm{tr} \colon \mathrm{M}_{n}(R) \to E$ for every $n \geq 1$. \end{lem}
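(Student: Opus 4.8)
The plan is to prove the contrapositive by a single explicit computation with matrix units, avoiding any structure theory of one-sided ideals of $\mathrm{M}_n(R)$. Suppose $\chi \colon R \to E$ is left torsion-free and let $L$ be any nonzero left ideal of $\mathrm{M}_n(R)$; it suffices to show $L \not\subseteq \ker(\chi \circ \mathrm{tr})$. So fix a nonzero element $m = (m_{ij}) \in L$ together with indices $k,l$ such that $m_{kl} \neq 0$.

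The crucial observation is that left multiplication by a suitable matrix unit transports the entry $m_{kl}$ onto the diagonal, where it is detected by the trace. Writing $e_{ij} \in \mathrm{M}_n(R)$ for the matrix unit with $1_R$ in position $(i,j)$ and $a\,e_{ij}$ for the matrix with $a \in R$ in that position, a direct check gives $\mathrm{tr}\bigl((a\,e_{lk})\,m\bigr) = a\,m_{kl}$ for every $a \in R$. Since $a\,e_{lk} \in \mathrm{M}_n(R)$ and $L$ is a left ideal, we have $(a\,e_{lk})\,m \in L$ for all $a$; hence if $L \subseteq \ker(\chi \circ \mathrm{tr})$, then $\chi(a\,m_{kl}) = \chi\bigl(\mathrm{tr}((a\,e_{lk})\,m)\bigr) = 0$ for all $a \in R$. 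But then the left ideal $R\,m_{kl}$ — which is nonzero because $R$ is unital and $m_{kl} \neq 0$ — is contained in $\ker \chi$, contradicting the left torsion-freeness of $\chi$. The right torsion-free case is entirely symmetric: replacing left by right multiplication one checks $\mathrm{tr}\bigl(m\,(a\,e_{lk})\bigr) = m_{kl}\,a$, so that a nonzero right ideal $L \subseteq \ker(\chi \circ \mathrm{tr})$ would force the nonzero right ideal $m_{kl}\,R$ into $\ker \chi$, again a contradiction.

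I do not anticipate a genuine obstacle; the whole argument rests on the trace identity $\mathrm{tr}\bigl((a\,e_{lk})\,m\bigr) = a\,m_{kl}$. There are only two bookkeeping points to be careful about. First, one must multiply on the side matching the type of ideal — left multiplication for a left ideal — so that the product stays in $L$, and one must choose the matrix-unit indices (here $(l,k)$) so that the target entry ends up in a diagonal slot and hence survives $\mathrm{tr}$. Second, it is worth resisting the temptation to reduce to left ideals of $\mathrm{M}_n(R)$ of the form $\mathrm{M}_n(I)$ with $I$ a left ideal of $R$, since one-sided ideals of a matrix ring need not be of this shape; working instead with a single entry of a single element of $L$ sidesteps this entirely.
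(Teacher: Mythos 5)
Your proof is correct and takes essentially the same approach as the paper: both hinge on the trace identity for matrix units (the paper computes $\mathrm{tr}(e^{ij}a)=a_{ji}$; you compute $\mathrm{tr}\bigl((a\,e_{lk})m\bigr)=a\,m_{kl}$). The only cosmetic difference is that the paper first uses $R$-bilinearity of the trace to note that $\mathrm{tr}(I)$ is a left ideal inside $\ker\chi$, hence zero, and then deduces $I=0$ from the matrix-unit computation, whereas you fold these two steps into a single contrapositive argument.
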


\begin{proof} Suppose that $\chi \colon R \to E$ is left torsion-free and let~$I$ be a left ideal in~$M_n(R)$ contained in $\ker( \chi \circ \mathrm{tr})$.  Since the trace map is $R$-linear, it follows that~$\mathrm{tr}(I)$ is a left ideal in~$R$ contained in $\ker \chi$, and therefore $\mathrm{tr}(I) = 0$.  We claim that $I = 0$.  Let $a = (a_{i j}) \in I$, and denoting by $e^{i j} \in \mathrm{M}_n(R)$ the elementary matrix with $(e^{i j})_{i j} = 1$ and $(e^{i j})_{k \ell} = 0$ if $(k, \ell) \ne (i, j)$, we have $\mathrm{tr} ( e^{i j} a ) = \mathrm{tr} ( e^{i j} \sum_{k, \ell} a_{k \ell} e^{k \ell} ) = \mathrm{tr} ( \sum_{\ell} a_{j \ell} e^{i \ell} ) = a_{j i}$.  Since $e^{i j} a \in I$ for all $i, j$ and $\mathrm{tr}(I) = 0$, we conclude $a = 0$ as desired.  The proof for the right torsion-free case is analogous. \end{proof}

\begin{lem}\label{lemma:torsion-free.product} Let $R_{1},\ldots,R_{n}$ be rings and let $E$ be an abelian group.  For any left torsion-free (resp., right torsion-free) homomorphisms $\chi_{i} \colon R_{i} \to E$ $(i \in \{ 1,\ldots,n \})$, the homomorphism \begin{displaymath}
	R_{1} \times \ldots \times R_{n} \to E, \quad (r_{1}, \ldots, r_{n}) \mapsto \textstyle\sum\limits_{i=1}^{n} \chi_{i}(r_{i})
\end{displaymath} is left torsion-free (resp., right torsion-free), too. \end{lem}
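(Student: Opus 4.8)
The plan is to reduce the statement to the torsion-freeness of each individual $\chi_i$ by exploiting the central idempotents of the product ring $R \defeq R_1 \times \ldots \times R_n$. For $i \in \{1, \ldots, n\}$ write $\epsilon_i \in R$ for the idempotent whose $i$-th coordinate is $1$ and whose remaining coordinates vanish. These idempotents are central, pairwise orthogonal, and sum to $1$; consequently, for every left ideal~$I$ of~$R$ one has $\epsilon_i I \subseteq I$ and a decomposition $I = \epsilon_1 I \oplus \ldots \oplus \epsilon_n I$, where $\epsilon_i I$ is the copy, placed in the $i$-th coordinate, of a left ideal~$I_i$ of~$R_i$. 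Denote by $\chi \colon R \to E$ the homomorphism $(r_1, \ldots, r_n) \mapsto \sum_{i=1}^n \chi_i(r_i)$.

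Suppose now that~$I$ is a left ideal of~$R$ with $I \subseteq \ker \chi$; I want to conclude $I = 0$. Fix~$j$. Since $\epsilon_j I \subseteq I \subseteq \ker \chi$, and every element of $\epsilon_j I$ has the shape $(0, \ldots, 0, a, 0, \ldots, 0)$ with $a \in I_j$ sitting in the $j$-th slot, and $\chi$ sends such an element to $\chi_j(a)$, we obtain $I_j \subseteq \ker \chi_j$. As $\chi_j$ is left torsion-free and $I_j$ is a left ideal of~$R_j$, this forces $I_j = 0$. Since~$j$ was arbitrary, $I = \epsilon_1 I \oplus \ldots \oplus \epsilon_n I = 0$, which is precisely left torsion-freeness of~$\chi$. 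For the right torsion-free case one argues identically: the idempotents $\epsilon_i$ being central, the very same slicing applies to right ideals (alternatively, one invokes the left case for the opposite rings).

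I do not expect a genuine obstacle. The only point worth isolating is the elementary structural fact that left (resp., right) ideals of a finite direct product of rings are exactly direct products of left (resp., right) ideals, which follows at once from the decomposition $1 = \epsilon_1 + \ldots + \epsilon_n$ into orthogonal central idempotents. If one prefers to avoid even this, an easy induction on~$n$ reduces everything to the case $n = 2$, where the same two-line computation with the idempotents $(1,0)$ and $(0,1)$ does the job.
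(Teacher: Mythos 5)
Your proof is correct and takes essentially the same route as the paper's: both use the orthogonal central idempotents of $R_1 \times \ldots \times R_n$ to slice a left ideal $I \subseteq \ker\chi$ into pieces $\epsilon_i I$, observe that each piece projects to a left ideal of $R_i$ killed by $\chi_i$, and conclude from left torsion-freeness of each $\chi_i$ that all pieces vanish. The paper simply phrases the conclusion as $e^i I = 0$ for every $i$ implying $I = 0$, rather than invoking the direct-sum decomposition $I = \epsilon_1 I \oplus \ldots \oplus \epsilon_n I$, but this is a cosmetic difference only.
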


\begin{proof} Let $R \defeq R_1 \times \ldots \times R_n$ and let $\chi \colon R \to E$, $(r_1, \dots, r_n) \mapsto \sum_{i=1}^n \chi_i(r_i)$.  Suppose that the $\chi_i \colon R_i \to E$ are left torsion-free and let~$I$ be a left ideal in~$R$ contained in $\ker \chi$.  For $i \in \{ 1, \ldots, n \}$ denote by $\pi_i \colon R \to R_i$ the projection and let $e^i \in R$ be the central idempotent with $(e^i)_i = 1$ and $(e^i)_j = 0$ for $j \ne i$.  Then $\pi_i(e^i I)$ is a left ideal in~$R_i$ contained in $\ker \chi_i$.  It follows that $\pi_i(e^i I) = 0$ and thus $e^i I = 0$ for all $i \in \{ 1, \dots, n \}$, which implies $I = 0$ as desired.  The proof for the right torsion-free case is analogous. \end{proof}

\begin{cor}\label{corollary:torsion-free.semisimple} Every semisimple ring admits a torsion-free homomorphism into $\mathbb Q / \mathbb Z$. \end{cor}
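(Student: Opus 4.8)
The statement to prove is Corollary~\ref{corollary:torsion-free.semisimple}: every semisimple ring admits a torsion-free homomorphism into $\mathbb{Q}/\mathbb{Z}$.

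This follows immediately from the Artin–Wedderburn theorem together with the three preceding lemmas. Let me sketch the proof.

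The plan: A semisimple ring $R$ is, by Artin–Wedderburn, isomorphic to a finite product $\mathrm{M}_{n_1}(D_1) \times \cdots \times \mathrm{M}_{n_k}(D_k)$ of matrix rings over division rings $D_1, \ldots, D_k$. By Lemma~\ref{lemma:torsion-free.division}, each $D_i$ admits a torsion-free homomorphism $\chi_i \colon D_i \to \mathbb{Q}/\mathbb{Z}$. By Lemma~\ref{lemma:torsion-free.matrix}, composing with the trace map gives a torsion-free homomorphism $\chi_i \circ \mathrm{tr} \colon \mathrm{M}_{n_i}(D_i) \to \mathbb{Q}/\mathbb{Z}$ (both left and right torsion-free, since each $\chi_i$ is). Finally, by Lemma~\ref{lemma:torsion-free.product}, the map sending $(m_1, \ldots, m_k)$ to $\sum_{i=1}^k (\chi_i \circ \mathrm{tr})(m_i)$ is a torsion-free homomorphism $R \to \mathbb{Q}/\mathbb{Z}$, which is what we want.

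There's no real obstacle here — it's a direct assembly of the three lemmas via Artin–Wedderburn. The only point requiring a word of care is that we need the homomorphisms to be simultaneously left and right torsion-free, but Lemma~\ref{lemma:torsion-free.division} already gives torsion-free (i.e., both-sided) homomorphisms on division rings, and Lemmas~\ref{lemma:torsion-free.matrix} and~\ref{lemma:torsion-free.product} each preserve left torsion-freeness and right torsion-freeness separately, hence preserve the two-sided property. So one just notes this and is done.

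\begin{proof}
By the Artin–Wedderburn theorem, a semisimple ring~$R$ is isomorphic to a finite product $\mathrm{M}_{n_{1}}(D_{1}) \times \ldots \times \mathrm{M}_{n_{k}}(D_{k})$ for some division rings $D_{1}, \ldots, D_{k}$ and integers $n_{1}, \ldots, n_{k} \geq 1$.  By Lemma~\ref{lemma:torsion-free.division}, for each $i \in \{ 1, \ldots, k \}$ there is a torsion-free homomorphism $\chi_{i} \colon D_{i} \to \mathbb{Q}/\mathbb{Z}$.  Since~$\chi_{i}$ is both left and right torsion-free, Lemma~\ref{lemma:torsion-free.matrix} shows that $\chi_{i} \circ \mathrm{tr} \colon \mathrm{M}_{n_{i}}(D_{i}) \to \mathbb{Q}/\mathbb{Z}$ is again both left and right torsion-free, i.e., torsion-free.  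Applying Lemma~\ref{lemma:torsion-free.product} to the homomorphisms $\chi_{i} \circ \mathrm{tr}$, we conclude that
\begin{displaymath}
	\mathrm{M}_{n_{1}}(D_{1}) \times \ldots \times \mathrm{M}_{n_{k}}(D_{k}) \to \mathbb{Q}/\mathbb{Z}, \quad (m_{1}, \ldots, m_{k}) \mapsto \textstyle\sum\limits_{i=1}^{k} \chi_{i}(\mathrm{tr}(m_{i}))
\end{displaymath}
is torsion-free, hence so is the corresponding homomorphism $R \to \mathbb{Q}/\mathbb{Z}$.
\end{proof}
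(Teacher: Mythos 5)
Your proof is correct and follows exactly the same route as the paper's: decompose via Artin--Wedderburn and then apply Lemma~\ref{lemma:torsion-free.division}, Lemma~\ref{lemma:torsion-free.matrix} and Lemma~\ref{lemma:torsion-free.product} in turn. You simply spell out the assembly (including the observation that each lemma preserves left and right torsion-freeness separately, hence the two-sided property) which the paper leaves implicit.
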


\begin{proof} Thanks to the famous Artin-Wedderburn theorem, any semisimple ring is isomorphic to $M_{n_1}(D_1) \times \ldots \times M_{n_m}(D_m)$ for suitable positive integers $n_1, \ldots, n_m$ and division rings $D_1, \ldots, D_m$.  Thus we may apply Lemma~\ref{lemma:torsion-free.division}, Lemma~\ref{lemma:torsion-free.matrix} and Lemma~\ref{lemma:torsion-free.product}. \end{proof}

We are now ready to establish the first main result of the present note. Our proof utilizes Pontryagin duality for modules, which we recall in the appendix.

\begin{thm}\label{theorem:frobenius.characters} Let $R$ be a left Artinian ring. The following are equivalent: \begin{enumerate}
	\item[$(1)$] $\soc^{\ast}({}_R R)$ embeds into ${}_R (R /\! \rad R)$,
	\item[$(2)$] $R$ admits a finitarily left torsion-free homomorphism into $\mathbb Q / \mathbb Z$,
	\item[$(3)$] $R$ admits a finitarily left torsion-free character.
\end{enumerate} \end{thm}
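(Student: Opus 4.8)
The plan is to prove the cycle of implications $(1)\Rightarrow(2)\Rightarrow(3)\Rightarrow(1)$. Write $S\defeq R/\rad R$ and $M\defeq\soc^{\ast}({}_R R)$. The key preliminary observation is that, since $R$ is left Artinian, $\soc({}_R R)$ has finite length and $M$ (the sum of its finite isotypic components) is \emph{finite as a set}; moreover every nonzero finite left ideal $I\subseteq R$ has nonzero socle $\soc(I)$, which is semisimple and finite, hence contained in $M$. Consequently, for any abelian group $E$, a homomorphism $\chi\colon R\to E$ is finitarily left torsion-free if and only if $\ker(\chi|_M)$ contains no nonzero submodule of $M$, equivalently $\chi|_T\neq0$ for every simple submodule $T\subseteq M$. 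Thus finitary left torsion-freeness is entirely a property of the restriction $\chi|_M$ to the finite module $M$.

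For $(1)\Rightarrow(2)$: fix an embedding $\iota\colon M\hookrightarrow{}_R S$ and, by Corollary~\ref{corollary:torsion-free.semisimple}, a torsion-free homomorphism $\psi\colon S\to\mathbb Q/\mathbb Z$. Since $\mathbb Q/\mathbb Z$ is injective as a $\mathbb Z$-module and $M\subseteq R$, the composite $\psi\circ\iota\colon M\to\mathbb Q/\mathbb Z$ extends to $\chi\colon R\to\mathbb Q/\mathbb Z$. For a nonzero submodule $N\subseteq M$, the image $\iota(N)$ is a nonzero left ideal of $S$, so $\psi(\iota(N))\neq0$ by left torsion-freeness of $\psi$, whence $\chi|_N\neq0$; by the preliminary observation, $\chi$ is finitarily left torsion-free. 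The implication $(2)\Rightarrow(3)$ is immediate: composing with $\mathbb Q/\mathbb Z\hookrightarrow\mathbb R/\mathbb Z$ leaves the kernel, and hence the torsion-free condition, unchanged.

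For $(3)\Rightarrow(1)$, the substantial step, let $\chi\colon R\to\mathbb R/\mathbb Z$ be finitarily left torsion-free. As $M$ is finite, the restriction $\chi_0\defeq\chi|_M$ has image in the torsion subgroup $\mathbb Q/\mathbb Z$ of $\mathbb R/\mathbb Z$, and by the preliminary observation $\ker\chi_0$ contains no nonzero submodule of $M$. Form the Pontryagin dual $M^{\vee}\defeq\Hom_{\mathbb Z}(M,\mathbb Q/\mathbb Z)$, a finite right $R$-module, with evaluation pairing $M^{\vee}\times M\to\mathbb Q/\mathbb Z$. The annihilator in $M$ of the cyclic right submodule $\chi_0 R\subseteq M^{\vee}$ is precisely the largest submodule of $M$ contained in $\ker\chi_0$, hence is $0$; since this pairing induces a perfect, inclusion-reversing duality between submodules of $M$ and of $M^{\vee}$, it follows that $\chi_0 R=M^{\vee}$, i.e.\ $M^{\vee}$ is cyclic. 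To finish, let $S^{\ast}\subseteq S$ be the two-sided ideal formed by the Artin--Wedderburn factors of $S$ whose coefficient division ring is finite; this is a finite semisimple, hence finite Frobenius, ring, and both $M$ and $M^{\vee}$ are modules over it. One checks that $M$ embeds into ${}_R S$ if and only if it embeds into ${}_{S^{\ast}}S^{\ast}$, and that $M^{\vee}$ is cyclic over $R$ if and only if it is cyclic over $S^{\ast}$. Since Pontryagin duality interchanges the two regular modules of the finite Frobenius ring $S^{\ast}$, cyclicity of $M^{\vee}$ is equivalent to each of the following: $M^{\vee}$ is a quotient of $S^{\ast}_{S^{\ast}}$; a direct summand of $S^{\ast}_{S^{\ast}}$; a submodule of $S^{\ast}_{S^{\ast}}$; and finally, after dualizing, $M\cong(M^{\vee})^{\vee}$ is a submodule of ${}_{S^{\ast}}S^{\ast}\subseteq{}_R S$. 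This last statement is exactly $(1)$.

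I expect the main obstacle to be this last translation in $(3)\Rightarrow(1)$: isolating the finite semisimple part $S^{\ast}$, verifying that both conditions descend to it, and deploying the correct finite-module form of Pontryagin duality — double duality, the submodule/quotient correspondence, and the interchange of the two regular modules over the finite Frobenius ring $S^{\ast}$. By contrast, the implications $(1)\Rightarrow(2)\Rightarrow(3)$ are essentially formal once the preliminary observation has reduced finitary torsion-freeness to a condition on the finite socle $M$ and Corollary~\ref{corollary:torsion-free.semisimple} is in hand.
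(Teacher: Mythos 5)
Your proof is correct, and the cycle $(1)\Rightarrow(2)\Rightarrow(3)$ follows essentially the same path as the paper (extend a torsion-free character of $S=R/\!\rad R$ along the embedding $\soc^{\ast}({}_R R)\hookrightarrow{}_R S$ using divisibility of $\mathbb Q/\mathbb Z$, then include $\mathbb Q/\mathbb Z$ into $\mathbb R/\mathbb Z$); the organizing preliminary that reduces finitary torsion-freeness to the finite module $M=\soc^{\ast}({}_R R)$ is a clean way to package what the paper does implicitly. For $(3)\Rightarrow(1)$, however, your argument runs \emph{dual} to the paper's. Both proofs split $S\cong E\times U$ with $E$ finite semisimple and $U$ without non-trivial finite modules (your $S^{\ast}$ and its complement), and both invoke Wood's self-duality $\smash{\widehat E}\cong E$ for the finite Frobenius ring $E$. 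But the paper works on the ``primal'' side: it shows that $K=h^{-1}(U)$ annihilates $\soc^{\ast}(R)$, builds the explicit map $\phi\colon a\mapsto a.\chi$ from $\soc^{\ast}(R)$ into $\smash{\widehat E}$, proves injectivity directly from finitary torsion-freeness, and then identifies $\smash{\widehat E}\cong E\hookrightarrow S$. You instead work on the dual side: you show that $\chi_0 R$ exhausts $M^{\vee}$ (i.e.\ $M^{\vee}$ is a cyclic right $S^{\ast}$-module), then exploit semisimplicity to convert ``cyclic/quotient'' into ``submodule'' and dualize back through $\smash{\widehat{S^{\ast}}}\cong S^{\ast}$ to land $M$ inside ${}_{S^{\ast}}S^{\ast}\subseteq{}_R S$. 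The two arguments are Pontryagin-dual images of each other; yours trades the explicit construction of an embedding for an abstract annihilator/biduality argument plus the semisimple collapse of quotient = summand = submodule. One small presentational point: dualizing ``$M^{\vee}$ is a submodule of $S^{\ast}_{S^{\ast}}$'' gives a \emph{quotient} map ${}_{S^{\ast}}S^{\ast}\twoheadrightarrow M$, and it is only after a further appeal to semisimplicity (quotient = submodule) that one gets the embedding; your chain of equivalences makes this available, but the final sentence reads as if duality directly sends submodule to submodule — worth a word of justification.
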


\begin{proof} $(1) \!\Longrightarrow\! (2)$.  Since $S \defeq R /\! \rad R$ is semisimple there is by Corollary~\ref{corollary:torsion-free.semisimple} a torsion-free homomorphism $\chi \colon S \to \mathbb Q / \mathbb Z$.  By hypothesis we have an embedding $\phi \colon \soc^{\ast}({}_R R) \to {}_R S$.  Now since $\mathbb Q / \mathbb Z$ is divisible, i.e., injective as a $\mathbb Z$-module, there exists a homomorphism $\ol \chi \colon R \to \mathbb Q / \mathbb Z$ such that $\ol \chi \vert_{\soc^{\ast}({}_R R)} = \chi \circ \phi$.  We claim that~$\ol \chi$ is finitarily left torsion-free.  To see this, let~$I$ be any nonzero finite left ideal of~$R$.  Then we find a minimal (nonzero) left ideal~$I_0$ of~$R$ such that $I_0 \subseteq I$.  As~$I_0$ is finite, $I_0 \subseteq \soc^{\ast}({}_R R)$.  It follows that $\phi(I_0)$ is a nonzero submodule of~${}_R S$, i.e., $\phi(I_0)$ is a nonzero left ideal of the ring~$S$.  Since~$\chi$ is left torsion-free, we have that $\phi(I_0) \nsubseteq \ker \chi$ and thus $I_0 \nsubseteq \ker (\chi \circ \phi)$.  By choice of~$\ol \chi$ and~$I_0$, this implies that $I \nsubseteq \ker \ol \chi$.  This shows that $\ker \ol \chi$ contains no nonzero finite left ideal.
	
$(2) \!\Longrightarrow\! (3)$.  Since $\mathbb T \cong \mathbb R / \mathbb Z$, this is obvious.

$(3) \!\Longrightarrow\! (1)$.  Let~$R$ be left Artinian.  By applying the Artin-Wedderburn theorem we find a finite semisimple ring~$E$ and a semisimple ring~$U$ without non-trivial finite left modules, together with a surjective homomorphism $h \colon R \to E \times U$ with $\ker h = \rad R$.  Consider the projection $p \colon E \times U \to E$ and let $h_E \defeq p \circ h$, so that $K \defeq \ker h_E = h^{-1}(U)$.  Since $\ker h = \rad R$ and~$U$ has no non-trivial finite left modules, it is easy to see that $K T = 0$ for every minimal finite left ideal~$T$ of~$R$.  We conclude that $K \soc^{\ast}(R) = 0$.

Now suppose that $\chi \colon R \to \mathbb T$ is a finitarily left torsion-free homomorphism.  For each $a \in A \defeq \soc^{\ast}(R)$ we have just shown that $K \subseteq \ker(a \chi)$, whence there exists a unique $a . \chi \in \smash{\widehat E}$ such that $a . \chi \circ h_E = a \chi$.  Moreover, viewing~$E$ as a right $R$-module, the homomorphism $h_E \colon R_R \to E_R$ induces a homomorphism $\phi \colon {}_R A \to {}_R \smash{\widehat E}$, $a \mapsto a . \chi$.  Furthermore, as~$\chi$ is finitarily left torsion-free we deduce that~$\phi$ is injective: if $a \in A \setminus \{ 0 \}$, then $R a \nsubseteq \ker \chi$, i.e., there exists $r \in R$ such that $1 \ne \chi(r a) = (a . \chi) (h_E(r))$, wherefore $a . \chi \ne 1$.  This shows that ${}_R A$ embeds into ${}_R \smash{\widehat E}$.  Finally, since~$E$ is finite and semisimple, thus Frobenius, we have that ${}_E \smash{\widehat E} \cong {}_E E$ by work of Wood~\cite[Thm.~3.10]{wood1}, and hence ${}_R \smash{\widehat E} \cong {}_R E$.  Thus the composition of embeddings ${}_R A \to {}_R \smash{\widehat E} \to {}_E E \to {}_R (R /\! \rad R)$ provides an embedding of ${}_R A = \soc^{\ast}({}_R R)$ into ${}_R (R /\! \rad R)$, as desired. \end{proof}

Let us also add a direct argument for the implication $(3) \!\Longrightarrow\! (2)$ of Theorem~\ref{theorem:frobenius.characters}.  Suppose for a left Artinian ring~$R$ a finitarily left torsion-free homomorphism $\chi \colon R \to \mathbb{R}/\mathbb Z$ is given.  Since $F \defeq \soc^{\ast}({}_R R)$ is finite, $\chi (F)$ is a finite subgroup of $\mathbb R / \mathbb Z$, thus contained in the torsion subgroup $\mathbb Q / \mathbb Z$.  By divisibility of $\mathbb Q/\mathbb Z$, there exists a homomorphism $\chi^{\ast} \colon R \to \mathbb Q / \mathbb Z$ such that $\chi^{\ast}|_F = \chi|_F$.  In particular, $F \cap (\ker \chi^{\ast}) = F \cap (\ker \chi)$.  In turn, $\chi^{\ast}$ must be finitarily left torsion-free, as every finite left ideal of~$R$ contains a minimal left ideal.

By the method of proof, we have the following.

\begin{cor}\label{corollary:frobenius.characters} Let~$R$ be a left Artinian ring. If $\soc({}_R R) \cong {}_R (R /\! \rad R)$ (in particular, if $R$ is Frobenius), then~$R$ admits a left torsion-free homomorphism into $\mathbb Q / \mathbb Z$. \end{cor}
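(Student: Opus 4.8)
The plan is to follow the argument for the implication $(1) \Longrightarrow (2)$ of Theorem~\ref{theorem:frobenius.characters} word for word, with the full socle $\soc({}_R R)$ in place of the finitary socle $\soc^{\ast}({}_R R)$. Writing $S \defeq R /\! \rad R$, I would first apply Corollary~\ref{corollary:torsion-free.semisimple} to obtain a torsion-free homomorphism $\chi \colon S \to \mathbb Q / \mathbb Z$. The hypothesis provides an embedding $\phi \colon \soc({}_R R) \to {}_R S$ (indeed an isomorphism). Since $\mathbb Q / \mathbb Z$ is divisible, hence injective as a $\mathbb Z$-module, the composite $\chi \circ \phi \colon \soc({}_R R) \to \mathbb Q / \mathbb Z$ extends along the inclusion $\soc({}_R R) \hookrightarrow R$ to a homomorphism $\ol\chi \colon R \to \mathbb Q / \mathbb Z$ with $\ol\chi\vert_{\soc({}_R R)} = \chi \circ \phi$.

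It then remains to verify that $\ol\chi$ is left torsion-free, i.e., that $\ker \ol\chi$ contains no nonzero left ideal of~$R$. Given a nonzero left ideal $I \subseteq R$, I would use that~$R$ is left Artinian to find a minimal left ideal $I_0 \subseteq I$; by definition $I_0 \subseteq \soc({}_R R)$. Since $\phi$ is injective, $\phi(I_0)$ is a nonzero left ideal of the semisimple ring~$S$, so left torsion-freeness of~$\chi$ gives $\phi(I_0) \nsubseteq \ker \chi$, whence $I_0 \nsubseteq \ker(\chi \circ \phi) = \ker\bigl(\ol\chi\vert_{\soc({}_R R)}\bigr)$ and therefore $I \nsubseteq \ker \ol\chi$, as required.

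For the parenthetical claim I would simply recall that a Frobenius ring is quasi-Frobenius and satisfies condition~(i) or~(ii) of Definition~\ref{def:frob}, and that for quasi-Frobenius rings these conditions are equivalent (as explained after Theorem~\ref{theorem:nakayama}); hence $\soc({}_R R) \cong {}_R(R /\! \rad R)$ holds and the corollary applies. I do not expect any genuine obstacle: the essential point is that, unlike in the finitary setting, no Artin--Wedderburn decomposition $E \times U$ of the top quotient is needed, because for a left Artinian ring the ordinary socle already meets every nonzero left ideal, and the divisibility of $\mathbb Q / \mathbb Z$ takes care of the extension.
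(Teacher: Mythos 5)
Your proof is correct and follows essentially the same route as the paper, which simply refers back to the $(1)\Longrightarrow(2)$ argument of Theorem~\ref{theorem:frobenius.characters}: embed $\soc({}_R R)$ into $S = R/\!\rad R$, pull back a torsion-free homomorphism $\chi\colon S\to\mathbb Q/\mathbb Z$, and extend by divisibility, using that in a left Artinian ring every nonzero left ideal contains a minimal left ideal, which automatically lies in $\soc({}_R R)$. The observation that no finiteness restriction (and hence no $E\times U$ decomposition) is needed here is exactly the point.
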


\section{Dual modules and almost monotheticity}\label{section:almost.monothetic.modules}

This section offers a topological perspective on (finitarily) Frobenius rings, in terms of compact modules arising via Pontryagin duality.  Let us start off with a simple characterization of torsion freeness of characters.  For notation, see the appendix.

\begin{lem}\label{lemma:torsion-free.density} Let~$R$ be a ring and let $\chi \in \smash{\widehat R}$.  The following hold. \begin{enumerate}
	\item[$(1)$] The character $\chi$ is left torsion-free (resp., right torsion-free) if and only if $\chi R$ (resp., $R\chi$) is dense in $\smash{\widehat R}$.
	\item[$(2)$] The character $\chi$ is finitarily left torsion-free (resp., right torsion-free) if and only if $\chi$ is not contained in a finite-index closed proper submodule of $\smash{\widehat R_R}$ (resp., $\smash{{}_R \widehat R}$).
\end{enumerate} \end{lem}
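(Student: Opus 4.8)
The plan is to exploit the Pontryagin duality correspondence between closed submodules of $\smash{\widehat R}$ and submodules of $R$, together with the annihilator/orthogonality formalism recalled in the appendix. For part $(1)$, I would argue that $\chi R$ (the cyclic right submodule of $\smash{\widehat R_R}$ generated by $\chi$) is dense in $\smash{\widehat R}$ if and only if the only closed submodule containing it is all of $\smash{\widehat R}$, which by duality corresponds to the annihilator $\{ x \in R : \chi(R x) = 0 \}$ — equivalently $\{ x \in R : (\chi r)(x) = 0 \text{ for all } r \in R \}$ — being zero. Now the set $\{ x : \chi(Rx) = 0 \}$ is precisely the largest left ideal of $R$ contained in $\ker\chi$: it is a left ideal since $\chi(R(sx)) \subseteq \chi(Rx)$, it lies in $\ker\chi$ (take $r=1$), and any left ideal $I \subseteq \ker\chi$ satisfies $RI \subseteq I \subseteq \ker\chi$. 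Hence this ideal vanishes exactly when $\chi$ is left torsion-free, which is the claim; the right-torsion-free case is symmetric.

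For part $(2)$, the key observation is that a closed submodule $N \le \smash{\widehat R_R}$ has finite index if and only if its annihilator $N^{\perp} \le {}_R R$ is a finite left ideal — this is the restriction of the index-$n$/order-$n$ duality for discrete and compact abelian groups: $\smash{\widehat R}/N \cong \smash{\widehat{N^{\perp}}}$ as topological groups, and a compact abelian group is finite iff it is discrete iff its dual is finite. So $\chi$ lies in some finite-index closed proper submodule $N$ of $\smash{\widehat R_R}$ if and only if there is a nonzero finite left ideal $I = N^{\perp}$ with $\chi \in N = I^{\perp}$, i.e. $\chi(I) = 0$, i.e. $I \subseteq \ker\chi$. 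Since every nonzero finite left ideal contains a minimal (necessarily finite) one, and conversely a minimal finite left ideal generates nothing larger, this happens for some such $I$ precisely when $\ker\chi$ contains a nonzero finite left ideal — that is, precisely when $\chi$ fails to be finitarily left torsion-free. The statement for $\smash{{}_R\widehat R}$ is proved the same way with sides swapped.

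I expect the main point requiring care is the duality dictionary itself: that closed submodules of the compact module $\smash{\widehat R}$ are in inclusion-reversing bijection with submodules of the discrete module $R$ via orthogonal annihilators, that this bijection carries closed finite-index submodules to finite submodules, and that density of a subset is equivalent to its orthogonal complement being zero. All of these are standard consequences of Pontryagin duality for discrete/compact abelian groups and the fact that the $R$-module structure is transported contravariantly (a left module dualizes to a right module); I would simply cite the appendix for these facts. Once that dictionary is in place, both parts are immediate, and the only residual subtlety is the elementary reduction in part $(2)$ from arbitrary nonzero finite left ideals in $\ker\chi$ to minimal ones, which holds because $R$ is not assumed Artinian here but any nonzero finite left ideal, being a finite-length module, contains a simple submodule.
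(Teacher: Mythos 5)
Your proof is correct and follows essentially the same route as the paper: the Pontryagin annihilator correspondence between closed submodules of $\smash{\widehat R}$ and left ideals of $R$ (Proposition~\ref{proposition:pontryagin} together with Lemma~\ref{lemma:pontryagin}), combined with the finite-index/finite-order duality, the paper merely packaging both parts through the closed submodule $B = \overline{\chi R}$ and its annihilator $\Delta(B) = \{x \in R : Rx \subseteq \ker\chi\}$. The closing remark about reducing to minimal left ideals is superfluous, since the definition of finitarily left torsion-free already quantifies over arbitrary nonzero finite left ideals, so no such reduction is needed.
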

	
\begin{proof} Consider the closed submodule $B \defeq \overline{\chi R} \leq \widehat{R}_R$ and the corresponding left ideal \begin{equation*}\tag{$\ast$}\label{annihilator}
	\Delta (B) = \Delta (\chi R) = \{ x \in R \mid Rx \subseteq \ker \chi \} .
\end{equation*} Then $\chi$ is left torsion-free if and only if $\Delta (B) = \{ 0 \}$, which, by Proposition~\ref{proposition:pontryagin}, is the case if and only if $B = \smash{\widehat{R}}$. This proves~(1). In order to show~(2), we infer from~\eqref{annihilator} that $\chi$ is finitarily left torsion-free if and only if $\Delta (B)$ has no nonzero finite left sub-ideals, which, thanks to Proposition~\ref{proposition:pontryagin} and Lemma~\ref{lemma:pontryagin}, just means that $B$ is not contained in any finite-index proper closed submodules of $\smash{\widehat{R}}_R$. Of course, the latter is equivalent to $\chi$ not being contained in any finite-index proper closed submodules of $\smash{\widehat{R}}_R$, which readily completes the argument. The other cases are proven analogously. \end{proof}

We continue with a useful abstract concept for compact modules.  Given a ring~$R$, a \emph{compact right $R$-module} is a compact abelian group~$X$ together with a \emph{continuous} right $R$-module structure, i.e., such that $X \to X$, $x \mapsto x r$ is continuous for every $r \in R$.

\begin{definition} Let $R$ be a ring. A compact right $R$-module $X_R$ is said to be \emph{monothetic} if there exists $x \in X$ such that $\overline{xR} = X$. A compact right $R$-module $X_R$ is called \emph{almost monothetic} if every finite cover of $X_R$ by closed submodules contains $X$ itself, i.e., for every finite set $\mathcal{M}$ of closed submodules of $X_R$ we have \begin{displaymath}
	X = \bigcup \mathcal{M} \quad \Longrightarrow \quad X \in \mathcal{M} \, .
\end{displaymath} \end{definition}

It is easily seen that both monothetic and almost monothetic compact modules provide a generalization of cyclic finite modules, in the sense that a finite right $R$-module $X_R$ is cyclic if and only if $X_R$ is monothetic, if and only if $X_R$ is almost monothetic. For general compact modules, monotheticity implies almost monotheticity.

The term monotheticity was introduced in topological group theory by van~Dantzig~\cite{vanDantzig}: a topological group is said to be \emph{monothetic} if it contains a dense cyclic subgroup (which clearly implies that the group is abelian). For more details on such groups, the reader is referred to~\cite{HalmosSamelson, TopologicalGroups}. Considering compact abelian groups as compact right $\mathbb{Z}$-modules, our definition of monotheticity above naturally extends van Dantzig's concept to the realm of compact modules over arbitrary rings. Almost monotheticity appears to be the right generalization thereof in the context of MacWilliams' extension property, as will be substantiated by Theorem~\ref{theorem:almost.monothetic} and~Corollary~\ref{corollary:summary}.

Utilizing the following combinatorial Lemma~\ref{lemma:passman.gottlieb}, we will provide a simple characterization of almost monothetic compact modules in Proposition~\ref{proposition:almost.monothetic}.

\begin{lem}[{\cite[Lem.~5.2]{passman}; see also~\cite[Thm.~18]{gottlieb}}]\label{lemma:passman.gottlieb} Let $G$ be an abelian group. If $\mathcal{H}$ is a finite cover of $G$ by subgroups such that $G \ne \bigcup \mathcal{H} \setminus \{ H \}$ for every $H \in \mathcal{H}$, then $G / \bigcap \mathcal{H}$ is finite. \end{lem}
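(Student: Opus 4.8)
To prove Lemma~\ref{lemma:passman.gottlieb} I would reduce it to the classical covering theorem of B.~H.~Neumann, which asserts that whenever a group is a union of finitely many cosets of subgroups, those cosets whose associated subgroup has infinite index can be discarded and a covering still remains. (Commutativity of $G$ plays no role in any of this.) Granting Neumann's theorem, the argument is short.

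Since each subgroup $H \in \mathcal{H}$ is a coset of itself, $\mathcal{H}$ is in particular a covering of $G$ by cosets, so Neumann's theorem applies. If some $H \in \mathcal{H}$ had infinite index in $G$, then discarding all members of infinite index would leave a covering of $G$ by a proper subfamily of $\mathcal{H}$ that omits $H$; in particular $G = \bigcup \bigl( \mathcal{H} \setminus \{ H \} \bigr)$, contradicting the hypothesis. Hence $[G:H] < \infty$ for every $H \in \mathcal{H}$. Using the elementary estimate $[G : A \cap B] \le [G:A]\,[G:B]$ repeatedly over the finite family $\mathcal{H}$, one obtains $[G : \bigcap \mathcal{H}] \le \prod_{H \in \mathcal{H}} [G:H] < \infty$, so that $G / \bigcap \mathcal{H}$ is finite, as claimed.

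The one substantive ingredient is thus Neumann's covering theorem, which is exactly the content of the cited references and which I would invoke rather than reprove; this is also the step I expect to be the main obstacle in any self-contained treatment. If such a treatment were required, I would prove Neumann's theorem by induction on the number of cosets: the inductive step picks a member $H$ of infinite index, observes that the finitely many cosets of $H$ occurring in the covering cannot exhaust $G$, selects a coset $C$ of $H$ disjoint from all of them so that $C$ is covered by the remaining members, and then intersects the covering with $C$ to obtain --- after translating $C$ onto $H$ --- a covering of $H$ by strictly fewer cosets of subgroups of $H$, to which the inductive hypothesis applies. The care is all in tracking how finiteness of index passes between $G$ and $H$.
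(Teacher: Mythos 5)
The paper offers no proof of this lemma; it is stated with a bare citation to Passman and Gottlieb, so there is no internal argument to compare against. Your main proof is correct, and the route you take---reduce to B.~H.~Neumann's covering theorem---is precisely what those references supply. The irredundancy hypothesis combined with Neumann's theorem forces every member of $\mathcal{H}$ to have finite index (else the infinite-index members, including some irredundant $H$, could all be discarded), and then $[G : \bigcap \mathcal{H}] \le \prod_{H \in \mathcal{H}} [G:H] < \infty$ finishes. You also correctly observe that commutativity of $G$ is never used.

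One caution about the optional sketch in your final paragraph: as written it does not close. After choosing a coset $C$ of an infinite-index member $H$ and translating, you obtain a covering of $H$ by cosets of the subgroups $H \cap H_i$, and the induction hypothesis applied inside $H$ only controls the indices $[H : H \cap H_i]$. Since $[G : H] = \infty$, finiteness of $[H : H \cap H_i]$ gives no information at all about $[G : H_i]$; so ``tracking how finiteness of index passes between $G$ and $H$'' is not a loose end but the entire difficulty, and this particular descent does not resolve it. The usual proof instead inducts on the number of \emph{distinct} subgroups and replaces the cosets of $H$ wholesale by cosets of the other $H_i$, or, for the strong form you actually invoke, argues via a finite-index intersection of the finite-index members. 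None of this affects your main argument, which treats Neumann's theorem as a black box; it only means the sketch would need to be reworked before it could serve as a self-contained proof.
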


\begin{prop}\label{proposition:almost.monothetic} Let $R$ be a ring. A compact right $R$-module $X_R$ is almost monothetic if and only if every finite cover of $X_R$ by finite-index closed submodules contains $X$ itself. \end{prop}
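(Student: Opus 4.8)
The plan is to prove the implication from right to left; the reverse one is trivial, because a finite-index closed submodule of $X_R$ is in particular a closed submodule, so an almost monothetic $X_R$ automatically has the stated property for covers by finite-index closed submodules. So assume that every finite cover of $X_R$ by finite-index closed submodules contains $X$, and let $\mathcal{M}$ be an arbitrary finite set of closed submodules of $X_R$ with $X = \bigcup \mathcal{M}$; the goal is to show $X \in \mathcal{M}$. Since $\mathcal{M}$ is finite, I would first pass to a subset $\mathcal{M}' \subseteq \mathcal{M}$ that is minimal (with respect to inclusion) among those still covering $X$. Minimality makes $\mathcal{M}'$ an irredundant cover, i.e., $X \ne \bigcup (\mathcal{M}' \setminus \{ H \})$ for every $H \in \mathcal{M}'$, and its members remain closed submodules of $X_R$.

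The key step is to feed the irredundant finite cover $\mathcal{M}'$ of the abelian group $X$ by subgroups into the combinatorial Lemma~\ref{lemma:passman.gottlieb}, which yields that $N \defeq \bigcap \mathcal{M}'$ has finite index in $X$. Hence for each $H \in \mathcal{M}'$ one has $N \subseteq H$, so that $X / H$ is a quotient of the finite group $X / N$ and therefore $H$ has finite index in $X$; and $H$ is closed. Thus $\mathcal{M}'$ is a finite cover of $X_R$ by finite-index closed submodules, so the hypothesis gives $X \in \mathcal{M}'$, whence $X \in \mathcal{M}$, as desired.

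I do not anticipate a genuine difficulty here: the whole content is that Lemma~\ref{lemma:passman.gottlieb} automatically upgrades an arbitrary irredundant finite cover of $X$ by closed submodules into one by \emph{finite-index} closed submodules, after which the hypothesis applies directly. The only points deserving a line of care are the reduction to the minimal (hence irredundant) subcover $\mathcal{M}'$ — which is harmless since $\mathcal{M}' \subseteq \mathcal{M}$, so that $X \in \mathcal{M}'$ already implies $X \in \mathcal{M}$ — and the elementary observations that finite index passes from $N$ to every intermediate submodule while closedness is inherited directly from $\mathcal{M}$.
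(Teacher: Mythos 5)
Your proof is correct and follows essentially the same route as the paper's: reduce to an irredundant finite cover, apply Lemma~\ref{lemma:passman.gottlieb} to deduce that each member has finite index, and then invoke the hypothesis. You merely spell out a couple of steps the paper leaves implicit (the passage from the intersection having finite index to each member having finite index, and the return from the minimal subcover $\mathcal{M}'$ to $\mathcal{M}$).
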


\begin{proof} The implication ($\Longrightarrow$) is obvious. In order to prove ($\Longleftarrow$), let $\mathcal{M}$ be a finite set of closed submodules of $X_R$ with $X = \bigcup \mathcal{M}$. We wish to show that $X \in \mathcal{M}$. Without loss of generality, we may assume that $X \ne \bigcup \mathcal{M}\setminus \{ M \}$ for every $M \in \mathcal{M}$. Thanks to Lemma~\ref{lemma:passman.gottlieb}, each member of $\mathcal{M}$ then has finite index in $X_R$, whence $X \in \mathcal{M}$ by our hypothesis. \end{proof}

\begin{cor}\label{corollary:almost.monothetic} Let $R$ be a ring and let $X_R$ be a compact right $R$-module. If $X_R$ is not covered by its finite-index closed proper submodules, then $X_R$ is almost monothetic. \end{cor}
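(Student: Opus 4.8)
The plan is to derive this Corollary directly from Proposition~\ref{proposition:almost.monothetic}. That proposition reduces almost monotheticity to a statement purely about \emph{finite-index} closed submodules: $X_R$ is almost monothetic if and only if every finite cover of $X_R$ by finite-index closed submodules contains $X$ itself. So it suffices to verify this latter condition under the hypothesis that $X_R$ is not covered by its finite-index closed proper submodules.

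First I would take an arbitrary finite cover $\mathcal{M}$ of $X_R$ by finite-index closed submodules; the goal is to show $X \in \mathcal{M}$. Suppose toward a contradiction that $X \notin \mathcal{M}$, so every member of $\mathcal{M}$ is a finite-index closed \emph{proper} submodule of $X_R$. Then $X = \bigcup \mathcal{M}$ exhibits $X_R$ as being covered by finite-index closed proper submodules, directly contradicting the hypothesis. Hence $X \in \mathcal{M}$, and by Proposition~\ref{proposition:almost.monothetic} the module $X_R$ is almost monothetic.

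In fact the argument is so short that one might simply inline it: given any finite cover $\mathcal{M}$ of $X_R$ by finite-index closed submodules, the hypothesis forces some member to equal $X$ (otherwise the members would be proper and would cover $X$), so the criterion of Proposition~\ref{proposition:almost.monothetic} applies. I do not expect any real obstacle here — the only thing to be careful about is invoking Proposition~\ref{proposition:almost.monothetic} rather than the definition directly, since the definition quantifies over \emph{all} closed submodules whereas the hypothesis only controls the finite-index ones; it is precisely Proposition~\ref{proposition:almost.monothetic} (and behind it Lemma~\ref{lemma:passman.gottlieb}) that bridges this gap. No separate proof environment is strictly needed, but for uniformity with the surrounding text I would present it as a one- or two-sentence proof.

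\begin{proof} By Proposition~\ref{proposition:almost.monothetic}, it suffices to show that every finite cover of $X_R$ by finite-index closed submodules contains $X$ itself. So let $\mathcal{M}$ be such a cover. If $X \notin \mathcal{M}$, then every member of $\mathcal{M}$ is a finite-index closed \emph{proper} submodule of $X_R$, and $X = \bigcup \mathcal{M}$ shows that $X_R$ is covered by its finite-index closed proper submodules, contrary to hypothesis. Hence $X \in \mathcal{M}$, and $X_R$ is almost monothetic. \end{proof}
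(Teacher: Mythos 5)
Your proof is correct and is exactly the argument the paper leaves implicit (the corollary appears with no proof environment, being an immediate consequence of Proposition~\ref{proposition:almost.monothetic}). You also correctly flag the one point worth noting: it is Proposition~\ref{proposition:almost.monothetic}, via Lemma~\ref{lemma:passman.gottlieb}, that lets one pass from a hypothesis about finite-index proper closed submodules to the full definition quantifying over all closed submodules.
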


We now return to Pontryagin duals of Artinian rings.  The subsequent result characterizes the finitarily Frobenius rings in topological terms, in turn offering an approach to the proof of the general MacWilliams theorem.

\begin{thm}\label{theorem:almost.monothetic} Let $R$ be a left Artinian ring. Then $\soc^{\ast}({}_R R)$ embeds into ${}_R (R /\! \rad R)$ if and only if $\smash{\widehat R}_R$ is almost monothetic. \end{thm}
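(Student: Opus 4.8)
The plan is to deduce this from Theorem~\ref{theorem:frobenius.characters} via the dictionary, supplied by Lemma~\ref{lemma:torsion-free.density}(2), between finitary torsion-freeness of a character and finite-index closed submodules of $\widehat R_R$. By Theorem~\ref{theorem:frobenius.characters}, the condition that $\soc^{\ast}({}_R R)$ embeds into ${}_R(R/\!\rad R)$ is equivalent to the existence of a finitarily left torsion-free character $\chi \in \widehat R$, and by Lemma~\ref{lemma:torsion-free.density}(2) such a~$\chi$ is precisely one lying in no finite-index closed proper submodule of $\widehat R_R$. The forward implication is then immediate: given the embedding, pick a finitarily left torsion-free $\chi \in \widehat R$; by Lemma~\ref{lemma:torsion-free.density}(2) it avoids every finite-index closed proper submodule of $\widehat R_R$, so $\widehat R_R$ is not covered by these, and Corollary~\ref{corollary:almost.monothetic} yields that $\widehat R_R$ is almost monothetic.

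For the converse I would exploit that~$R$ has only finitely many finite minimal left ideals: $\soc^{\ast}({}_R R)$ is a finite (semisimple) module, and the finite minimal left ideals of~$R$ are exactly its minimal submodules. Enumerate them as $T_1, \dots, T_N$ and set $B_j \defeq \{ \chi \in \widehat R \mid \chi|_{T_j} = 0 \}$. Since $T_j = R T_j$, each $B_j$ is a right submodule of $\widehat R$, being the annihilator of the left ideal~$T_j$; it is closed, it has finite index because $\widehat R / B_j \cong \widehat{T_j}$ is finite, and it is proper since $T_j \ne 0$ while $\mathbb T$ is a cogenerator. Now if $\widehat R_R$ is almost monothetic, Proposition~\ref{proposition:almost.monothetic} rules out $\widehat R_R = B_1 \cup \dots \cup B_N$, so there is $\chi \in \widehat R$ with $\chi|_{T_j} \ne 0$ for every~$j$; since every nonzero finite left ideal of~$R$ is a nonzero finite module and hence contains one of the minimal submodules~$T_j$, this~$\chi$ is finitarily left torsion-free, whence Theorem~\ref{theorem:frobenius.characters} gives the embedding $\soc^{\ast}({}_R R) \hookrightarrow {}_R(R/\!\rad R)$.

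The argument is light, being mostly a matter of transporting statements across Pontryagin duality and quoting Corollary~\ref{corollary:almost.monothetic} and Proposition~\ref{proposition:almost.monothetic}; the only place calling for a small argument is the reduction, in the converse direction, of an arbitrary nonzero finite left ideal to a finite minimal one, together with the verification that each $B_j$ really is a right submodule. I would also note that the degenerate case $\soc^{\ast}({}_R R) = 0$ fits the pattern uniformly: then~$R$ has no nonzero finite left ideal, $N = 0$, every character is vacuously finitarily left torsion-free, and $\widehat R_R$ has no finite-index closed proper submodule at all, hence is trivially almost monothetic.
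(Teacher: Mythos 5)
Your proof is correct and follows essentially the same route as the paper: the forward direction via Theorem~\ref{theorem:frobenius.characters}, Lemma~\ref{lemma:torsion-free.density}(2) and Corollary~\ref{corollary:almost.monothetic}, and the converse by considering the annihilators $\Gamma(T_j)$ of the finitely many finite minimal left ideals, avoiding their union by almost monotheticity, and deducing a finitarily left torsion-free character. The only (harmless) divergence is that you route through Proposition~\ref{proposition:almost.monothetic} and verify the $B_j$ have finite index, whereas the paper applies the definition of almost monothetic directly once the $\Gamma(T_j)$ are known to be proper closed submodules; also note the minor notational slip that $\chi|_{T_j}$ should be trivial (equal to $1$ in the multiplicative group~$\mathbb T$) rather than ``$=0$''.
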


\begin{proof} ($\Longrightarrow$) This follows from Theorem~\ref{theorem:frobenius.characters} and Lemma~\ref{lemma:torsion-free.density}(2) along with Corollary~\ref{corollary:almost.monothetic}.
	
($\Longleftarrow$) Suppose that $\smash{\widehat R}_R$ is almost monothetic. Since $R$ is left Artinian, the set $\mathcal L$ of all finite simple left ideals of $R$ is finite.  By Proposition~\ref{proposition:pontryagin} and Lemma~\ref{lemma:pontryagin}, the finite set $\mathcal{M} \defeq \{ \Gamma (I) \mid I \in \mathcal{L} \}$ consists of closed proper submodules of $\smash{\widehat R}_R$. As $\smash{\widehat R}_R$ is almost monothetic, there exists $\chi \in \smash{\widehat R}$ with $\chi \notin \bigcup \mathcal{M}$, i.e., $I \nsubseteq \ker \chi$ for every $I \in \mathcal{L}$.  Since every nonzero finite left ideal of $R$ contains a member of $\mathcal{L}$, it follows that $\chi$ is finitarily left torsion-free.  Hence, $\soc^{\ast}({}_RR)$ embeds into ${}_R (R /\! \rad R)$ by Theorem~\ref{theorem:frobenius.characters}. \end{proof}

The following lemma is the main reason for our interest in almost monothetic modules.

\begin{lem}\label{lemma:almost.monothetic} Let $R$ be a ring, $X_R$ and~$Y_R$ be compact right $R$-modules, where~$X_{R}$ is almost monothetic.  Let $f_1, \dots, f_n, g_1, \dots, g_n \colon X_R \to Y_R$ be continuous homomorphisms such that \begin{displaymath}
	\forall \alpha \in \widehat Y \colon \qquad \sum_{i=1}^n \int \alpha (f_{i}(x)) \, d \mu_X(x) = \sum_{i=1}^n \int \alpha (g_{i}(x)) \, d \mu_X(x) \,.
	\end{displaymath} Then, for each $j \in \{1, \dots, n \}$, there exists $k \in \{ 1, \dots, n \}$ such that $\ker \widehat {g_k} \subseteq \ker \smash{\widehat {f_j}}$. \end{lem}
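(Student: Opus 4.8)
The plan is to pass to Pontryagin duals and exploit the density characterisation of torsion-freeness together with almost monotheticity of $X_R$. First I would dualise the hypothesis. Since each $f_i \colon X_R \to Y_R$ is a continuous homomorphism, it induces a dual homomorphism $\widehat{f_i} \colon \widehat{Y} \to \widehat{X}$ of discrete left $R$-modules, and likewise for $g_i$. The integral condition $\int \alpha(f_i(x))\, d\mu_X(x)$ should, by orthogonality of characters on the compact group $X$, equal $1$ (the Haar measure of $X$) when $\widehat{f_i}(\alpha) = 0$, and $0$ otherwise; so $\int \alpha(f_i(x))\, d\mu_X(x)$ is the indicator of $\{\alpha : \widehat{f_i}(\alpha)=0\} = \ker \widehat{f_i}$. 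Hence the hypothesis says precisely that, for every $\alpha \in \widehat{Y}$, the number of indices $i$ with $\alpha \in \ker\widehat{f_i}$ equals the number of indices $i$ with $\alpha \in \ker\widehat{g_i}$ — in particular $\bigcup_i \ker\widehat{f_i} = \bigcup_i \ker\widehat{g_i}$ as subsets of $\widehat{Y}$, with matching multiplicities.

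Next, fix $j$ and suppose towards a contradiction that $\ker\widehat{g_k} \not\subseteq \ker\widehat{f_j}$ for every $k$. The idea is to produce an element $\alpha \in \widehat Y$ lying in $\ker\widehat{f_j}$ but in none of the $\ker\widehat{g_k}$, which contradicts the multiplicity equality (the left count would be strictly larger at $\alpha$ than naively available on the right — one has to argue this carefully using the counting statement, not just the set equality). To find such $\alpha$, restrict attention to the submodule $Z \defeq \ker\widehat{f_j} \leq \widehat Y$. For each $k$, the intersection $Z \cap \ker\widehat{g_k}$ is a proper submodule of $Z$ by our contradiction hypothesis. Now I want to conclude that these finitely many proper submodules cannot cover $Z$; this is where almost monotheticity of $X_R$ enters, via its dual. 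By Pontryagin duality the dual $\widehat X$ is, up to the submodule structure, a quotient-like object, and the statement ``$X_R$ almost monothetic'' translates (through Proposition~\ref{proposition:almost.monothetic} and the annihilator correspondence of the appendix) into a statement about discrete $R$-modules embedding into $\widehat X$: a finite family of proper submodules of such a module cannot exhaust it unless one of them is the whole module. Applying this translated property to the module $Z$ — which maps to $\widehat X$ via $\widehat{f_j}$ restricted appropriately, or more precisely $Z = \ker \widehat{f_j}$ sits inside $\widehat Y$ and one uses that $\widehat Y$ itself, or the relevant image, inherits the covering property — yields the desired $\alpha$.

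The delicate point I anticipate is exactly this transfer: almost monotheticity is a hypothesis on $X$, but the covering I need to defeat lives in $\widehat Y$, and the only link between $X$ and $Y$ is the collection of maps $f_i, g_i$. So the argument must package the $\widehat{f_i}, \widehat{g_i}$ together — presumably forming the single homomorphism $\widehat Y \to \widehat X{}^{n}$ (or $\widehat{Y}{}^{?} \to \widehat X$) whose components are the $\widehat{f_i}$, or working inside $\widehat{X}$ directly — so that the covering of a submodule of $\widehat Y$ pulls back to, or pushes forward to, a covering by finite-index closed submodules in $\widehat X$ that almost monotheticity forbids. Getting the multiplicities (not just the supports) to cooperate, and handling the case where some $\widehat{f_i}$ or $\widehat{g_i}$ has small image, is the technical heart; the rest is a routine application of the duality dictionary in the appendix (Proposition~\ref{proposition:pontryagin}, Lemma~\ref{lemma:pontryagin}) and the indicator-function computation of the Haar integrals.
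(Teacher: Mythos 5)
The indicator-function computation you open with is correct (it is Lemma~\ref{lemma:characters.are.ap} in the appendix), and the counting reformulation of the hypothesis is a valid observation.  But the route you then take is blocked, and you have in fact put your finger on exactly the place where it breaks.  You want to show that the proper submodules $Z \cap \ker\widehat{g_k}$ cannot cover $Z \defeq \ker\widehat{f_j} \leq \widehat Y$.  Almost monotheticity, however, is a hypothesis about the compact module $X_R$; the covering you need to forbid lives inside a submodule of the discrete module $\widehat Y$.  There is no duality dictionary entry that transports ``proper closed submodules do not cover $X$'' to a statement about finite covers of a submodule of $\widehat Y$: the annihilator correspondence $\Gamma, \Delta$ turns sums of submodules into intersections and vice versa, but it does \emph{not} interact with unions, which is what a cover is.  So the ``translated property'' you invoke does not exist, and without it your attempted contradiction (producing $\alpha \in \ker\widehat{f_j}$ missing every $\ker\widehat{g_k}$) has no source.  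There is also a more basic obstruction to staying in the character world: to separate a point $f_j(x_0)$ from the set $\bigcup_k g_k(X)$ you would need that set to be a closed \emph{subgroup}, but it is only a closed subset, so no character can do the separating, and the counting equality alone cannot rule out $f_j(x_0)$ landing outside all the $g_k(X)$.

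The paper's proof avoids both problems by working on the $Y$-side and the $X$-side rather than the $\widehat Y$-side.  Via the Bohr approximation theorem (Theorem~\ref{theorem:bohr}), the identity $\sum_i \int \alpha(f_i(x))\,d\mu_X = \sum_i \int \alpha(g_i(x))\,d\mu_X$, which a priori holds only for $\alpha \in \widehat Y$, extends by linearity and density to all $h \in C(Y)$.  This is the step that replaces your character-counting observation and is strictly stronger: it lets one test against arbitrary bump functions, not just characters.  With that in hand, one applies Urysohn's lemma: if some $f_j(x_0)$ lay outside the closed set $B \defeq \bigcup_k g_k(X)$, a non-negative $h \in C(Y)$ vanishing on $B$ and positive at $f_j(x_0)$ would make the $g$-side of the extended identity zero and the $f$-side strictly positive (using strict positivity of Haar measure), a contradiction.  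Thus $f_j(X) \subseteq \bigcup_k g_k(X)$, equivalently $X = \bigcup_k f_j^{-1}\bigl(g_k(X)\bigr)$, a finite cover of $X$ by closed submodules.  Now, and only now, almost monotheticity of $X_R$ applies directly, giving $f_j(X) \subseteq g_k(X)$ for some $k$, from which $\ker\widehat{g_k} \subseteq \ker\widehat{f_j}$ is immediate.  Your instinct that almost monotheticity must act on a covering is right; the gap is that you never manufacture a covering of $X$ itself, and that requires leaving the character setting via Bohr approximation and Urysohn.
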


\begin{proof} Since by Theorem~\ref{theorem:bohr} the linear span of $\smash{\widehat Y}$ is dense in $C(Y)$, and by continuity of the map $C(Y) \to \mathbb C$, $h \mapsto \sum_{i=1}^n \int h (f_{i}(x) - g_{i}(x)) \, d \mu_X(x)$, our hypothesis implies that \begin{displaymath}
	\forall h \in C(Y) \colon \qquad \sum_{i=1}^n \int h (f_{i}(x) - g_{i}(x)) \, d \mu_X(x) = 0 \,.
\end{displaymath} Let $j \in \{ 1, \dots, n \}$.  Then $f_{j}(X)$ is contained in $B \defeq \bigcup_{k=1}^{n} g_k(X)$: otherwise, assuming that $f_j(x) \notin B$ for some $x \in X$ and noting that $B$ is closed in $Y$, by Urysohn's lemma we find $h \in C(Y)$ with $h \ge 0$ such that $h|_B \equiv 0$ and $h(f_j(x)) > 0$, which implies that \begin{displaymath}
	0 = \sum_{i=1}^n \int h (f_{i}(x) - g_{i}(x)) \, d \mu_X(x) \ge \int h (f_{j}(x)) \, d \mu_X(x) > 0
\end{displaymath} and thus gives a contradiction. Hence, $X = \bigcup_{k=1}^{n} f^{-1}_{j}(g_{k}(X))$. Since $X$ is almost monothetic, there exists $k \in \{ 1,\ldots,n \}$ such that $X = f^{-1}_{j}(g_{k}(X))$, i.e., $f_{j}(X) \subseteq g_{k}(X)$. We show that $\ker \widehat {g_k} \subseteq \ker \smash{\widehat {f_j}}$.  To this end, let $\kappa \in \smash{\widehat Y}$ with $\kappa \in \ker \widehat {g_k}$, i.e., $\kappa \circ g_k = 1$.  Then \begin{displaymath}
	(\kappa \circ f_j)(X) = \kappa(f_j(X)) \subseteq \kappa(g_k(X)) = (\kappa \circ g_k)(X) = 1 \,,
\end{displaymath} so that $\kappa \in \ker \smash{\widehat {f_j}}$ as desired. \end{proof}

We finish this section with the observation that, by the method of proof of Theorem~\ref{theorem:almost.monothetic}, we have the following.

\begin{cor} Let $R$ be a left Artinian ring. If $\soc({}_RR) \cong {}_R (R / \rad R)$ (in particular, if $R$ is Frobenius), then $\smash{\widehat R}_R$ is monothetic. \end{cor}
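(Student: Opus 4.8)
The plan is to follow the argument for the forward implication of Theorem~\ref{theorem:almost.monothetic}, but to upgrade \emph{finitarily} torsion-free to genuinely torsion-free, which is exactly what the stronger hypothesis $\soc({}_RR)\cong{}_R(R/\!\rad R)$ buys us. First I would invoke Corollary~\ref{corollary:frobenius.characters} to obtain a left torsion-free homomorphism $\chi_0\colon R\to\mathbb Q/\mathbb Z$; composing with a fixed embedding $\mathbb Q/\mathbb Z\hookrightarrow\mathbb R/\mathbb Z\cong\mathbb T$ yields a character $\chi\in\widehat R$. Since this embedding is injective we have $\ker\chi=\ker\chi_0$, so $\ker\chi$ still contains no nonzero left ideal; that is, $\chi$ is left torsion-free as a character.

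Next I would apply Lemma~\ref{lemma:torsion-free.density}(1): $\chi$ being left torsion-free is equivalent to $\chi R$ being dense in $\widehat R$, i.e.\ $\overline{\chi R}=\widehat R$. By definition this says precisely that $\widehat R_R$ is monothetic, with $\chi$ serving as a topological generator. The parenthetical case follows since every Frobenius ring satisfies $\soc({}_RR)\cong{}_R(R/\!\rad R)$ by Definition~\ref{def:frob}.

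There is no serious obstacle here; the content of the corollary lies entirely in the two results it cites. The only point to watch is that one must use Corollary~\ref{corollary:frobenius.characters} (the full-socle hypothesis) rather than Theorem~\ref{theorem:frobenius.characters}, so that the resulting character is torsion-free on all of $R$ and not merely on the finitary socle. This is what makes $\chi R$ dense in the \emph{whole} of $\widehat R$ rather than just outside the finite-index closed proper submodules, and hence gives monotheticity rather than only almost monotheticity (compare Theorem~\ref{theorem:almost.monothetic} together with the observation that monotheticity implies almost monotheticity).
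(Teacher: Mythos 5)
Your proof is correct and takes exactly the same route as the paper: obtain a left torsion-free character from Corollary~\ref{corollary:frobenius.characters}, then invoke Lemma~\ref{lemma:torsion-free.density}(1) to conclude that $\overline{\chi R}=\widehat R$, i.e.\ that $\widehat R_R$ is monothetic. The small step of passing from a homomorphism into $\mathbb Q/\mathbb Z$ to a genuine character via the inclusion $\mathbb Q/\mathbb Z\hookrightarrow\mathbb T$ is the same detail the paper leaves implicit.
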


\begin{proof} This is an immediate consequence of Corollary~\ref{corollary:frobenius.characters} and Lemma~\ref{lemma:torsion-free.density}(1). \end{proof}

\section{MacWilliams' extension theorem for the Hamming weight}\label{section:macwilliams}

In this section we prove MacWilliams' extension theorem for the Hamming weight on general Frobenius rings.  Let us start off with some basic terminology.  Let~$G$ be an abelian group.  By a \emph{weight} on~$G$ we mean any function from~$G$ to~$\mathbb C$.  Given a weight $w \colon G \to \mathbb C$ and any positive integer~$n$, we denote $w(x) \defeq \sum_{i=1}^{n} w(x_{i})$ for $x \in G^{n}$.  The \emph{Hamming weight} $w_{\mathrm{H}} \colon G \to \mathbb{C}$ is defined by \begin{displaymath}
	w_{\mathrm{H}}(x) \defeq \begin{cases}
		0 & \text{if } x = 0, \\
		1 & \text{otherwise}
	\end{cases} \qquad (x \in G) .
\end{displaymath} The following well-known general character-theoretic observation, noted in~\cite[p.~572, Eq.~(1)]{iovanov}, connects the Hamming weight with the Haar integration on Pontryagin duals. For notation, see the appendix.

\begin{lem}\label{lemma:weights} Let $G$ be an abelian group. For every $x \in G$, \begin{displaymath}
	w_{\mathrm{H}}(x) = 1 - \int \gamma(x) \, d\mu_{\widehat{G}}(\gamma) .
\end{displaymath} \end{lem}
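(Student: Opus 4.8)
The plan is to recognise the integrand $\gamma \mapsto \gamma(x)$ as a character of the compact abelian group $\widehat{G}$ — namely, the image of $x$ under the canonical evaluation map $G \to \widehat{\widehat{G}}$ — and then to apply the orthogonality relation for characters of a compact group. So first I would note that, $G$ being discrete, $\widehat{G}$ is compact and carries a normalised Haar probability measure $\mu_{\widehat{G}}$; and for fixed $x \in G$ the map $\mathrm{ev}_x \colon \widehat{G} \to \mathbb{T}$, $\gamma \mapsto \gamma(x)$, is a continuous homomorphism, i.e.\ $\mathrm{ev}_x \in \widehat{\widehat{G}}$.

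Next I would invoke the standard fact that for a compact abelian group $K$ and $\psi \in \widehat{K}$ one has $\int_K \psi \, d\mu_K = 1$ when $\psi$ is trivial and $\int_K \psi \, d\mu_K = 0$ otherwise; this is immediate from translation invariance of Haar measure, since if $\psi(k_0) \ne 1$ for some $k_0 \in K$, then $\psi(k_0)\int_K \psi\,d\mu_K = \int_K \psi(k_0 k)\,d\mu_K(k) = \int_K \psi\,d\mu_K$, forcing the integral to vanish. Applying this with $K = \widehat{G}$ and $\psi = \mathrm{ev}_x$, the character $\mathrm{ev}_x$ is trivial precisely when $\gamma(x) = 1$ for every $\gamma \in \widehat{G}$, which — because $\mathbb{T}$ (equivalently $\mathbb{Q}/\mathbb{Z}$) is a cogenerator of the category of abelian groups, so that characters separate points of $G$ — happens exactly when $x = 0$. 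Hence $\int \gamma(x)\, d\mu_{\widehat{G}}(\gamma)$ equals $1$ if $x = 0$ and $0$ if $x \ne 0$, and subtracting from $1$ yields exactly $w_{\mathrm{H}}(x)$ in both cases.

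There is no real obstacle here: the argument is a one-line consequence of the orthogonality relation together with the separation property of characters. The only point requiring a little care is bookkeeping — making sure that compactness of $\widehat{G}$, the existence and normalisation of $\mu_{\widehat{G}}$, the orthogonality relation, and the cogenerator/separation property are all cited from the Pontryagin duality material recalled in the appendix, so that the short proof is genuinely self-contained.
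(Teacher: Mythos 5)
Your proof is correct and takes essentially the same route as the paper: recognise $\gamma \mapsto \gamma(x)$ as the character $\eta_G(x) \in \smash{\widehat{\widehat{G}}}$, apply the orthogonality relation for characters of a compact abelian group (Lemma~\ref{lemma:characters.are.ap} in the appendix), and observe that $\eta_G(x)$ is trivial if and only if $x = 0$. The only difference is cosmetic — you re-derive the orthogonality relation in-line and appeal to the cogenerator property of $\mathbb{T}$ for injectivity of $\eta_G$, where the paper simply cites Lemma~\ref{lemma:characters.are.ap} and Theorem~\ref{theorem:pontryagin}.
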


\begin{proof} Noting that $x \ne 0$ if and only if $\eta_G(x) \ne 1$ by Theorem~\ref{theorem:pontryagin}, the result is immediate from Lemma~\ref{lemma:characters.are.ap} applied to the group $\smash{\widehat G}$. \end{proof}

\begin{cor}\label{corollary:weights} Let $G$ be an abelian group and $n \geq 1$. For every $x \in G^{n}$, \begin{displaymath}
	w_{\mathrm{H}}(x) = n - \sum_{i=1}^{n} \int \gamma(x_{i}) \, d\mu_{\widehat{G}}(\gamma) .
\end{displaymath} \end{cor}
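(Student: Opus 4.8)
The plan is to derive Corollary~\ref{corollary:weights} directly from Lemma~\ref{lemma:weights} by working one coordinate at a time. Recall that for a vector $x = (x_1, \dots, x_n) \in G^n$ the extended Hamming weight was defined by $w_{\mathrm{H}}(x) = \sum_{i=1}^n w_{\mathrm{H}}(x_i)$, and that $\mu_{\widehat G}$ denotes the normalized Haar measure on the compact Pontryagin dual $\widehat G$, so that the constant function $1$ integrates to $1$.

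First I would apply Lemma~\ref{lemma:weights} to each component $x_i \in G$ separately; this is legitimate since the lemma imposes no restriction on its argument. This yields
\[
	w_{\mathrm{H}}(x_i) = 1 - \int \gamma(x_i) \, d\mu_{\widehat G}(\gamma) \qquad (i = 1, \dots, n).
\]
Summing these $n$ identities and inserting the definition of $w_{\mathrm{H}}$ on $G^n$ gives
\[
	w_{\mathrm{H}}(x) = \sum_{i=1}^n w_{\mathrm{H}}(x_i) = \sum_{i=1}^n \Bigl( 1 - \int \gamma(x_i) \, d\mu_{\widehat G}(\gamma) \Bigr) = n - \sum_{i=1}^n \int \gamma(x_i) \, d\mu_{\widehat G}(\gamma),
\]
which is precisely the asserted formula.

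There is no genuine obstacle here: the statement is a purely formal consequence of Lemma~\ref{lemma:weights} together with the additive definition of the Hamming weight on tuples, the only point worth recording being that the $n$ constant terms sum to $n$ because the Haar measure on $\widehat G$ is a probability measure. One could alternatively obtain it from a single application of Lemma~\ref{lemma:weights} to the product group $G^n$ after identifying the characters of $G^n$ with tuples of characters of $G$, but the coordinatewise argument is shorter and avoids any discussion of $\widehat{G^n}$.
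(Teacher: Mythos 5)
Your argument is correct and is precisely the intended (implicit) justification: apply Lemma~\ref{lemma:weights} to each coordinate, use the definition $w_{\mathrm{H}}(x)=\sum_{i=1}^n w_{\mathrm{H}}(x_i)$, and sum, using that $\mu_{\widehat G}$ is a probability measure so the constants add to $n$. The paper leaves this as an immediate consequence without a written proof, and your coordinatewise derivation matches that reasoning exactly.
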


We proceed to rings.  Our main focus will be on the MacWilliams property. Let $R$ be a ring and consider its \emph{group of units} \begin{displaymath}
U(R) \defeq \{ u \in R \mid \exists v \in R \colon \, uv = vu = 1 \} .
\end{displaymath} Given $n \geq 1$, $\sigma \in S_{n}$ and $u \in U(R)^{n}$, we consider the module automorphisms \begin{gather*}
\Phi_{\sigma,u} \colon {}_R R^{n} \to {}_R R^{n}, \quad x \mapsto (x_{\sigma 1}u_{1},\ldots,x_{\sigma n}u_{n}) , \\
\Psi_{\sigma,u} \colon R^{n}_R \to R^{n}_R, \quad x \mapsto (u_{1}x_{\sigma 1},\ldots,u_{n}x_{\sigma n}) ,
\end{gather*} and note that $w_{\mathrm{H}}(\Phi_{\sigma,u}(x)) = w_{\mathrm{H}}(\Psi_{\sigma,u}(x)) = w_{\mathrm{H}}(x)$ for all $x \in R^{n}$.

\begin{definition} A ring~$R$ is called \emph{left MacWilliams} if, for every integer $n \ge 1$ and any homomorphism $\phi \colon {}_R M \to {}_R N$ between submodules $M, N$ of ${}_R R^{n}$ with $w_{\mathrm{H}} (\phi(x)) = w_{\mathrm{H}}(x)$ for all $x \in M$, there exist $\sigma \in S_{n}$ and $u \in U(R)^{n}$ with $\phi = \Phi_{\sigma,u}\vert_{M}^{N}$.  Analogously, a ring~$R$ will be called \emph{right MacWilliams} if, for every integer $n \ge 1$ and any homomorphism $\phi \colon M_R \to N_R$ between submodules $M, N$ of $R^{n}_R$ with $w_{\mathrm{H}} (\phi(x)) = w_{\mathrm{H}}(x)$ for all $x \in M$, there exist $\sigma \in S_{n}$ and $u \in U(R)^{n}$ with $\phi = \Psi_{\sigma,u}\vert_{M}^{N}$. \end{definition}

Our goal is to establish a link between the MacWilliams property and the finitary Frobenius property.  The next two lemmata together constitute a key observation. The arguments are reminiscent of Iovanov's work~\cite[Sec.~4.1]{iovanov}.

\begin{lem}\label{lemma:main} Let $R$ be a ring such that $\smash{\widehat R}_R$ is almost monothetic. Let $n\geq 1$, let $M$ be a left $R$-module and let $\phi, \psi \colon {}_RM \to {}_RR^{n}$ be homomorphisms with $w_{\mathrm{H}}(\phi(x)) = w_{\mathrm{H}}(\psi(x))$ for all $x \in M$.  Then, \begin{displaymath}
	\forall j \in \{ 1, \dots, n \} \ \exists k \in \{ 1,\ldots,n \} \colon \qquad \ker \psi_k \subseteq \ker \phi_j .
\end{displaymath} \end{lem}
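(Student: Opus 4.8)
The plan is to reduce the statement about the Hamming weight on the left modules $\phi,\psi\colon {}_RM \to {}_RR^n$ to an application of Lemma~\ref{lemma:almost.monothetic}, which concerns continuous homomorphisms between compact right $R$-modules. The bridge is Pontryagin duality. First I would dualise everything: applying the character functor $\widehat{\,\cdot\,}$ turns the component homomorphisms $\phi_j,\psi_j\colon {}_RM \to {}_RR$ into continuous homomorphisms $\widehat{\phi_j},\widehat{\psi_j}\colon \smash{\widehat R}_R \to \smash{\widehat M}_R$ of compact right $R$-modules, and $\smash{\widehat R}_R$ is almost monothetic by hypothesis. The aim is to feed the maps $f_i \defeq \widehat{\psi_i}$ and $g_i \defeq \widehat{\phi_i}$ into Lemma~\ref{lemma:almost.monothetic}: if the integral hypothesis of that lemma is satisfied, it yields for each $j$ some $k$ with $\ker\widehat{\widehat{\phi_j}} \subseteq \ker\widehat{\widehat{\psi_j}}$ — wait, one must be careful with the direction. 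Lemma~\ref{lemma:almost.monothetic} with $f_i,g_i\colon X_R \to Y_R$ concludes $\ker\widehat{g_k}\subseteq\ker\widehat{f_j}$; taking $X_R = \smash{\widehat R}_R$, $Y_R = \smash{\widehat M}_R$, $f_i = \widehat{\phi_i}$, $g_i = \widehat{\psi_i}$ gives for each $j$ a $k$ with $\ker\widehat{\widehat{\psi_k}}\subseteq\ker\widehat{\widehat{\phi_j}}$, and then using the natural isomorphism $M \cong \widehat{\widehat M}$ and $R \cong \widehat{\widehat R}$ (biduality, Theorem~\ref{theorem:pontryagin}) this transports back to $\ker\psi_k \subseteq \ker\phi_j$, which is exactly the claim.

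The second and more substantial step is to verify the integral hypothesis of Lemma~\ref{lemma:almost.monothetic}, namely that for every $\alpha \in \smash{\widehat{\widehat M}} \cong M$,
\begin{displaymath}
	\sum_{i=1}^n \int \alpha\bigl(\widehat{\phi_i}(\chi)\bigr)\, d\mu_{\widehat R}(\chi) \;=\; \sum_{i=1}^n \int \alpha\bigl(\widehat{\psi_i}(\chi)\bigr)\, d\mu_{\widehat R}(\chi)\,.
\end{displaymath}
Under the biduality identification, $\alpha \in \widehat{\widehat M}$ corresponds to evaluation at some $x \in M$, and $\alpha(\widehat{\phi_i}(\chi)) = \widehat{\phi_i}(\chi)(x) = \chi(\phi_i(x))$. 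So the left-hand side becomes $\sum_{i=1}^n \int_{\widehat R} \chi(\phi_i(x))\, d\mu_{\widehat R}(\chi)$, and I would recognise this via Corollary~\ref{corollary:weights} (applied with $G = R$, so $\smash{\widehat G} = \smash{\widehat R}$, and the point $\phi(x) \in R^n$): indeed $\sum_{i=1}^n \int_{\widehat R} \chi(\phi_i(x))\, d\mu_{\widehat R}(\chi) = n - w_{\mathrm{H}}(\phi(x))$. The same computation applied to $\psi$ gives $n - w_{\mathrm{H}}(\psi(x))$. Hence the required identity of integrals is literally equivalent to $w_{\mathrm{H}}(\phi(x)) = w_{\mathrm{H}}(\psi(x))$, which is our hypothesis. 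This is the crux, and it is really just bookkeeping once the right identifications are in place.

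One technical point I would want to handle carefully: Lemma~\ref{lemma:almost.monothetic} is stated for compact $R$-modules $X_R$, $Y_R$, and here $Y_R = \smash{\widehat M}_R$ is the Pontryagin dual of the (possibly non-finite, but discrete) left module ${}_RM$, so it is indeed compact, and $X_R = \smash{\widehat R}_R$ is compact and almost monothetic by assumption. I should also confirm that $\widehat{\phi_i}$ and $\widehat{\psi_i}$ are genuinely continuous $R$-module homomorphisms $\smash{\widehat R}_R \to \smash{\widehat M}_R$ — this is standard functoriality of Pontryagin duality for modules, which the appendix (Proposition~\ref{proposition:pontryagin}) supplies. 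The main obstacle, such as it is, is not conceptual but notational: keeping straight which module lives on which side after dualising (left $R$-modules dualise to right $R$-modules and vice versa), and matching the direction of the kernel inclusion in Lemma~\ref{lemma:almost.monothetic} against the double-dual identification so that the final inclusion $\ker\psi_k \subseteq \ker\phi_j$ — rather than its reverse — comes out. Once the assignment $f_i = \widehat{\phi_i}$, $g_i = \widehat{\psi_i}$ is fixed correctly and the Corollary~\ref{corollary:weights} computation is carried through, the proof is short.
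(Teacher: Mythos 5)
Your proposal is correct and follows essentially the same route as the paper's proof: rewrite the Hamming weight hypothesis via Corollary~\ref{corollary:weights} as an integral identity over $\smash{\widehat R}$, apply Lemma~\ref{lemma:almost.monothetic} to $X_R = \smash{\widehat R}_R$, $Y_R = \smash{\widehat M}_R$ with $f_i = \widehat{\phi_i}$, $g_i = \widehat{\psi_i}$, and then transport the resulting inclusion $\ker \widehat{\widehat{\psi_k}} \subseteq \ker \widehat{\widehat{\phi_j}}$ back to $\ker \psi_k \subseteq \ker \phi_j$ via the biduality isomorphism $\eta_M$. Your mid-argument self-correction on the roles of $f_i$ and $g_i$ lands on the right assignment, matching the paper exactly.
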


\begin{proof} In light of Corollary~\ref{corollary:weights}, our assumption means that \begin{displaymath}
	\forall x \in M \colon \qquad \sum_{i=1}^n \int \gamma(\phi_i(x)) \, d \mu_{\widehat R}(\gamma) =\sum_{i=1}^n \int \gamma(\psi_i(x)) \, d \mu_{\widehat R}(\gamma) \,,
\end{displaymath} or equivalently, \begin{displaymath}
	\forall x \in M \colon \qquad \sum_{i=1}^n \int \eta_M(x)(\widehat{\phi_i}(\gamma)) \, d \mu_{\widehat R}(\gamma) =\sum_{i=1}^n \int \eta_M(x)(\widehat{\psi_i}(\gamma))) \, d \mu_{\widehat R}(\gamma) \,.
\end{displaymath} The result then follows by applying Lemma~\ref{lemma:almost.monothetic} to $X_R = \smash{\widehat R}_R$ and $Y_R = \smash{\widehat M}_R$, together with the fact that $\eta_{M} \colon M \to \smash{\widehat{Y}}$ is an isomorphism by Pontryagin duality (Theorem~\ref{theorem:pontryagin}). \end{proof}

The conclusion of Lemma~\ref{lemma:main} will now be adapted as follows.

\begin{lem}\label{lemma:main.too} Let~$R$ be a ring such that $\smash{\widehat R}_R$ is almost monothetic. Let $n \ge 1$, let~$M$ be a left $R$-module and let $\phi, \psi \colon {}_R M \to {}_R R^{n}$ be homomorphisms such that $w_{\mathrm{H}}(\phi(x)) = w_{\mathrm{H}}(\psi(x))$ for all $x \in X$. Then there exist $j, k \in \{ 1, \dots, n \}$ such that $\ker \phi_j = \ker \psi_k$. \end{lem}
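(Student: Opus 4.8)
The plan is to bootstrap from Lemma~\ref{lemma:main}, which already gives, for every index $j$, some index $k$ with $\ker \psi_k \subseteq \ker \phi_j$. To upgrade a containment to an equality, I would apply Lemma~\ref{lemma:main} a \emph{second} time with the roles of $\phi$ and $\psi$ interchanged: since $w_{\mathrm H}(\psi(x)) = w_{\mathrm H}(\phi(x))$ for all $x \in M$ as well, the lemma also yields, for every index $\ell$, some index $m$ with $\ker \phi_m \subseteq \ker \psi_\ell$. So I have two maps, say $s \colon \{1,\dots,n\} \to \{1,\dots,n\}$ with $\ker \psi_{s(j)} \subseteq \ker \phi_j$ for all $j$, and $t \colon \{1,\dots,n\} \to \{1,\dots,n\}$ with $\ker \phi_{t(\ell)} \subseteq \ker \psi_\ell$ for all $\ell$.

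Composing these, for any starting index $j$ we get a chain
\[
\ker \phi_j \supseteq \ker \psi_{s(j)} \supseteq \ker \phi_{t(s(j))} \supseteq \ker \psi_{s(t(s(j)))} \supseteq \cdots,
\]
i.e. iterating the composite $t \circ s$ produces a weakly decreasing chain of submodules of $M$ whose terms are indexed by a finite set. Concretely, set $i_0 = j$ and $i_{r+1} = t(s(i_r))$; then $\ker \phi_{i_0} \supseteq \ker \phi_{i_1} \supseteq \ker \phi_{i_2} \supseteq \cdots$, and likewise the intermediate kernels $\ker \psi_{s(i_r)}$ are sandwiched in between consecutive terms. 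Since the sequence $(i_r)_{r \ge 0}$ takes values in the finite set $\{1,\dots,n\}$, by pigeonhole there are indices $r < r'$ with $i_r = i_{r'}$, and then the chain from position $r$ to position $r'$ closes up, forcing all inclusions in that segment to be equalities. In particular $\ker \phi_{i_r} = \ker \psi_{s(i_r)}$, which is an equality of the desired form with $j = i_r$ and $k = s(i_r)$.

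There is no real obstacle here; the argument is a short combinatorial closure of the two one-sided containments coming from Lemma~\ref{lemma:main}, using only finiteness of the index set and the descending chain behaviour. The one point to be careful about is that the statement of Lemma~\ref{lemma:main.too} only asserts \emph{existence} of \emph{some} pair $(j,k)$, not a statement for all $j$, so pigeonhole on a single iterated orbit is already enough; I need not produce a bijection. (One could in fact strengthen this: since $\ker \psi_{s(i)} \subseteq \ker \phi_i$ for all $i$ and there are $n$ of each kernel, a counting argument on the finitely many kernels shows the two families $\{\ker \phi_i\}$ and $\{\ker \psi_i\}$ coincide as multisets, but that refinement is not needed for the stated lemma.) I would also note in passing the harmless typo in the hypothesis ``for all $x \in X$'', which should read ``for all $x \in M$''; the proof uses it only through Lemma~\ref{lemma:main}.
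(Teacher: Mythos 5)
Your proposal is correct and follows essentially the same route as the paper: the paper also alternates applications of Lemma~\ref{lemma:main} (once as stated, once with $\phi$ and $\psi$ swapped) to build a weakly decreasing chain $\ker \phi_{j_0} \supseteq \ker \psi_{k_0} \supseteq \ker \phi_{j_1} \supseteq \cdots$ and then invokes pigeonhole on the $n+1$ indices $j_0,\dots,j_n$ to force a repetition and hence an equality somewhere in the chain. Your packaging of the two selections as maps $s$ and $t$ and iterating $t\circ s$ is just a tidier phrasing of the identical argument; you also correctly flag the typo ``$x\in X$'' for ``$x\in M$'' in the statement.
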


\begin{proof} Let $j_0 \defeq 1$.  By Lemma~\ref{lemma:main}, there are $k_0, \ldots, k_{n-1}, j_1, \ldots, j_n \in \{ 1, \dots, n \}$ with \begin{displaymath}
	\ker \phi_{j_0} \supseteq \ker \psi_{k_0} \supseteq \ker \phi_{j_1} \supseteq \ker \psi_{k_1} \supseteq \ldots \supseteq \ker \psi_{k_{n-1}} \supseteq \ker \phi_{j_{n}} .
\end{displaymath} Clearly, $j_{k} = j_{\ell}$ for some $k,\ell \in \{ 0,\ldots,n \}$ with $k < \ell$, and thus $\ker \phi_{j_{k}} = \ker \psi_{j_{k}}$. \end{proof}

We are ready to prove the generalized MacWilliams extension theorem.

\begin{prop}\label{proposition:macwilliams} Every left Artinian, left pseudo-injective ring~$R$ such that $\soc^*({}_R R)$ embeds into ${}_R (R /\! \rad R)$ is left MacWilliams. \end{prop}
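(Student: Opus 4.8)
The plan is to induct on the code length $n$, using the key Lemma~\ref{lemma:main.too} together with Lemma~\ref{lemma:bass} to "peel off" one coordinate at a time. Suppose $\phi \colon {}_R M \to {}_R N$ is a Hamming-weight-preserving homomorphism between submodules $M, N \subseteq {}_R R^n$. Viewing $\phi$ as a tuple $(\phi_1, \dots, \phi_n)$ of homomorphisms $\phi_i \colon M \to R$ (namely $\phi$ followed by the $i$-th coordinate inclusion $N \hookrightarrow R^n \twoheadrightarrow R$), and letting $\psi_i \colon M \to R$ denote the $i$-th coordinate of the inclusion $M \hookrightarrow R^n$, the hypothesis $w_{\mathrm H}(\phi(x)) = w_{\mathrm H}(x)$ for all $x \in M$ is exactly $w_{\mathrm H}(\phi(x)) = w_{\mathrm H}(\psi(x))$ in the notation of Lemma~\ref{lemma:main.too}. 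Note $\smash{\widehat R}_R$ is almost monothetic by Theorem~\ref{theorem:almost.monothetic}, so the hypotheses of Lemmata~\ref{lemma:main} and~\ref{lemma:main.too} are met.

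First I would apply Lemma~\ref{lemma:main.too} to obtain indices $j, k$ with $\ker \phi_j = \ker \psi_k$. By Lemma~\ref{lemma:bass} (applicable since $R$ is left Artinian and left pseudo-injective) there is a unit $u \in U(R)$ with $\phi_j(x) = \psi_k(x)\, u$ for all $x \in M$; that is, the $j$-th output coordinate of $\phi$ equals a unit multiple of the $k$-th input coordinate. This pairs coordinate $j$ of the codomain with coordinate $k$ of the domain. After composing $\phi$ on the target side with the monomial map that moves coordinate $j$ to position $n$ and scales by $u^{-1}$, and $M$ on the source side with the permutation swapping $k$ and $n$, we may assume $j = k = n$ and $\phi_n = \psi_n$, i.e.\ $\phi(x)_n = x_n$ for all $x \in M$. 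I then want to restrict attention to the first $n-1$ coordinates: one checks that since $\phi$ preserves Hamming weight and agrees with the identity in the last coordinate, the "truncated" map $(\phi_1, \dots, \phi_{n-1})$ still preserves Hamming weight as a map into $R^{n-1}$ — because for any $x \in M$, $w_{\mathrm H}(\phi(x)_1, \dots, \phi(x)_{n-1}) = w_{\mathrm H}(\phi(x)) - w_{\mathrm H}(x_n) = w_{\mathrm H}(x) - w_{\mathrm H}(x_n) = w_{\mathrm H}(x_1, \dots, x_{n-1})$. Strictly speaking the truncated map need not be injective and its domain/codomain are the images of $M, N$ under projection, so some care is needed to set up the induction on genuine submodules of $R^{n-1}$; the clean way is to induct on the statement that any such weight-preserving homomorphism is a monomial transformation, allowing the domain and codomain to be arbitrary submodules of $R^{m}$ for $m < n$, and to verify the projected map is well-defined and weight-preserving.

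The main obstacle I anticipate is precisely this bookkeeping in the inductive step: one must verify that after peeling off the matched coordinate, the projection of $M$ (resp.\ $N$) onto the remaining $n-1$ coordinates, together with the induced map, again forms a legitimate instance of the weight-preserving-homomorphism setup — in particular that the induced map is well-defined (this uses $\phi_n = \psi_n$, so the kernel of the projection $M \to R^{n-1}$ maps under $\phi$ into the kernel of $N \to R^{n-1}$) and still has the weight-preservation property on the nose. The base case $n = 1$ is immediate: a weight-preserving homomorphism $\phi \colon {}_R M \to {}_R N$ with $M, N \le {}_R R$ has $\ker \phi = 0$ (weight preservation forces injectivity), so $M \cong N$ via $\phi$, and one applies Lemma~\ref{lemma:bass} with $g$ the inclusion $M \hookrightarrow R$ and $h = \phi$ (viewed into $R$) — wait, that needs $\ker g = \ker h$, which holds since both are injective only if $\ker g = 0 = \ker h$ as submodules of $M$; more directly one applies Lemma~\ref{lemma:bass} to the two embeddings $M \to R$ — to get a unit $u$ with $\phi(x) = xu$, i.e.\ $\phi = \Phi_{\mathrm{id}, u}|_M^N$. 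Once the reduction machinery is in place, stitching the units and permutations from each stage together yields the desired global $\sigma \in S_n$ and $u \in U(R)^n$ with $\phi = \Phi_{\sigma, u}|_M^N$, completing the proof.
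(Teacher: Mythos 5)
Your proposal is correct in outline and follows the same strategy as the paper: induct on $n$, use Lemma~\ref{lemma:main.too} to match coordinates with equal kernels, and use Lemma~\ref{lemma:bass} to produce a unit. The difference lies in the setup of the induction. You work with a weight-preserving homomorphism $\phi \colon M \to N$ between submodules of ${}_R R^n$, so in the inductive step you must pass to the projections $M', N' \leq R^{n-1}$ and verify that the induced truncated map is well-defined and still weight-preserving --- precisely the bookkeeping you flag as the main obstacle. The paper avoids this altogether by proving the (apparently stronger) claim that for \emph{any} abstract left $R$-module $M$ and any two homomorphisms $\phi, \psi \colon {}_R M \to {}_R R^{n}$ satisfying $w_{\mathrm H}(\phi(x)) = w_{\mathrm H}(\psi(x))$ for all $x \in M$, there exist $\sigma \in S_n$, $u \in U(R)^n$ with $\psi = \Phi_{\sigma, u} \circ \phi$; the MacWilliams property then follows by taking $\phi$ to be the inclusion $M \hookrightarrow R^n$. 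The point of this reformulation is that $M$ is never replaced during the induction: after finding $j, k$ with $\ker \phi_j = \ker \psi_k$ and a unit $u$ with $\psi_k(x) = \phi_j(x) u$, one subtracts $w_{\mathrm H}(\phi_j(x)) = w_{\mathrm H}(\psi_k(x))$ from the weight identity to get $\sum_{i \ne j} w_{\mathrm H}(\phi_i(x)) = \sum_{i \ne k} w_{\mathrm H}(\psi_i(x))$, and then applies the inductive hypothesis directly to $(\phi_i)_{i \ne j}$ and $(\psi_i)_{i \ne k}$, both of which are homomorphisms from the \emph{same} module $M$ into $R^{n-1}$. No projections, no well-definedness checks, no pre-permuting of coordinates. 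Your version can be carried through with the care you describe, but this change of quantifiers is exactly what makes the induction clean.
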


\begin{proof} By virtue of Theorem~\ref{theorem:almost.monothetic} we have that $\smash{\widehat R}_{R}$ is almost monothetic. Given $n\geq 1$, any left $R$-module $M$ and homomorphisms $\phi,\psi \colon {}_RM \to {}_RR^{n}$ with $w_{\mathrm{H}}(\phi(x)) = w_{\mathrm{H}}(\psi(x))$ for all $x \in M$, we need to show that there exist $\sigma \in S_{n}$ and $u \in U(R)^{n}$ such that $\psi = \Phi_{\sigma,u} \circ \phi$. Our proof proceeds by induction on $n \geq 1$. For the induction base, let $n = 1$. Since $\smash{\widehat{R}_{R}}$ is almost monothetic, Lemma~\ref{lemma:main.too} implies that $\ker \phi = \ker \psi$. Thanks to Lemma~\ref{lemma:bass}, there exists $u \in U(R)$ such that $\psi(x) = \phi(x)u$ for all $x \in M$, i.e., $\psi = \Phi_{\mathrm{id},u} \circ \phi$. For the inductive step, suppose that the statement is true for some $n \geq 1$. As $\smash{\widehat{R}_{R}}$ is almost monothetic, Lemma~\ref{lemma:main.too} implies that there exist $j, k \in \{ 1, \dots, n+1 \}$ such that $\ker \phi_j = \ker \psi_k$. Using Lemma~\ref{lemma:bass} again, we find $u \in U(R)$ with $\psi_k(x) = \phi_j(x) u$ for all $x \in M$. For every $x \in M$, \begin{displaymath}
	\sum_{i=1}^{n+1} w_{\mathrm{H}}(\phi_i(x)) = \sum_{i=1}^{n+1} w_{\mathrm{H}}(\psi_i(x))
\end{displaymath} by assumption and $w_{\mathrm{H}}(\phi_j(x)) = w_{\mathrm{H}}(\phi_j(x) u) = w_{\mathrm{H}}(\psi_k(x))$, which implies that \begin{displaymath}
	\sum_{i=1, \, i \ne j}^{n+1} w_{\mathrm{H}}(\phi_i(x)) = \sum_{i=1, \, i \ne k}^{n+1} w_{\mathrm{H}}(\psi_i(x)) .
\end{displaymath} Appealing to the induction hypothesis and the fact that $\psi_k(x) = \phi_j(x) u$ for all $x \in M$, we find $\sigma \in S_{n+1}$ and $u \in U(R)^{n+1}$ so that $\psi = \Phi_{\sigma,u} \circ \phi$. \end{proof}

Now we present the main result, which characterizes the Artinian rings satisfying the MacWilliams extension property.  In addition to the results of the previous sections, we use a strategy developed by Dinh, L\'opez-Permouth~\cite{DinhLopezB} and Wood~\cite{wood2}.

\begin{thm}\label{theorem:main} A left Artinian ring~$R$ is left MacWilliams if and only if it is left pseudo-injective and $\soc^*(R)$ embeds into ${}_R (R /\! \rad R)$. \end{thm}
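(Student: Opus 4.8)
The "if" direction is already in hand: it is exactly Proposition~\ref{proposition:macwilliams}. So the work is in the "only if" direction: assuming $R$ is left MacWilliams, I must derive that $R$ is left pseudo-injective and that $\soc^*({}_R R)$ embeds into ${}_R(R/\!\rad R)$.

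For the \emph{pseudo-injectivity}, the plan is to use the $n=1$ case of the MacWilliams property. Given a left ideal $I \le {}_R R$ and an injective homomorphism $f \colon I \to R$, note that $f$ is automatically Hamming-weight preserving in length one, since $w_{\mathrm{H}}$ on $R^1$ only detects whether an element is zero and $f$ is injective. Hence the MacWilliams property (with $n=1$, $M = I$, $N = f(I)$) yields $\sigma \in S_1$ and $u \in U(R)$ with $f = \Phi_{\mathrm{id},u}|_I^{f(I)}$, i.e.\ $f(x) = xu$ for all $x \in I$; this is precisely right multiplication by an element of $R$, so $R$ is left pseudo-injective. (This recovers, in the Artinian setting, the Dinh--L\'opez-Permouth observation cited after Definition~\ref{def:finitely.frobenius}.)

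For the \emph{embedding of the finitary socle}, the plan is to argue contrapositively via Theorem~\ref{theorem:frobenius.characters}: if $\soc^*({}_R R)$ does \emph{not} embed into ${}_R(R/\!\rad R)$, then $R$ admits no finitarily left torsion-free character, and I must manufacture from this failure a Hamming-isometry between submodules of some $R^n$ that is not monomial, following the Dinh--L\'opez-Permouth~\cite{DinhLopezB} / Wood~\cite{wood2} strategy. Concretely, since $R$ is left Artinian the set $\mathcal L$ of finite simple left ideals is finite, say $\mathcal L = \{I_1,\dots,I_n\}$; because no finitarily left torsion-free character exists, by Lemma~\ref{lemma:torsion-free.density}(2) every $\chi \in \widehat R$ lies in some finite-index closed proper submodule of $\widehat R_R$ containing one of the $\Gamma(I_t)$, which dualizes to the statement that each character kills some $I_t$ — equivalently $\widehat R_R = \bigcup_{t=1}^n \Gamma(I_t)$ is a nontrivial finite cover by finite-index closed submodules, so $\widehat R_R$ is \emph{not} almost monothetic. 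The task is then to convert the non-almost-monotheticity of $\widehat R_R$, i.e.\ a finite cover $\widehat R = \bigcup_t \Gamma(I_t)$ with no redundant member, into two homomorphisms $\phi,\psi \colon {}_R M \to {}_R R^n$ that are Hamming-equivalent (by Corollary~\ref{corollary:weights} this is a Haar-integral identity over $\widehat R$, which the cover identity should supply after multiplying characters through with suitable "double-counting" coefficients coming from the combinatorics of the cover) but for which no coordinate permutation plus units matches them up — because matching them up would force, via Lemma~\ref{lemma:bass}, an inclusion among the $\ker$'s of the components that contradicts the irredundancy of the cover, hence contradicts minimality of $\mathcal L$. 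Carrying this out cleanly — choosing $M$, $\phi$, $\psi$ explicitly and verifying both the weight identity and the non-monomiality — is the step I expect to be the main obstacle; the natural choice is to take $M$ a suitable direct sum built from the simple modules $\tp(R e_i)$ indexed so that the two maps distribute the "extra" copies of a repeated simple across different coordinates, mimicking the classical finite-field counterexample of MacWilliams.

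One subtlety to watch is that Theorem~\ref{theorem:almost.monothetic} already tells us that the failure of the socle embedding is \emph{equivalent} to $\widehat R_R$ failing to be almost monothetic, so I may phrase the whole contrapositive purely in the language of almost monotheticity: assume $\widehat R_R$ is not almost monothetic, produce a non-monomial isometry, contradicting left MacWilliams. This keeps the argument parallel to Proposition~\ref{proposition:macwilliams}, where almost monotheticity was the crucial hypothesis feeding Lemmata~\ref{lemma:main} and~\ref{lemma:main.too}; here I run the implication in reverse, using a minimal finite cover by finite-index closed submodules as the combinatorial engine. Assembling the two halves — pseudo-injectivity from length one, socle embedding from the contrapositive almost-monotheticity argument — together with Proposition~\ref{proposition:macwilliams} for the converse, completes the proof of Theorem~\ref{theorem:main}.
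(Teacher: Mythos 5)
Your treatment of the ``if'' direction (via Proposition~\ref{proposition:macwilliams}) and of pseudo-injectivity (from the $n=1$ case of the MacWilliams property, as in the paper) both match the paper and are correct.

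The genuine gap is in the socle-embedding direction, and you identify it yourself: ``Carrying this out cleanly \dots is the step I expect to be the main obstacle.'' Your plan is to pass to the equivalent statement that $\smash{\widehat R}_R$ is not almost monothetic, obtain the irredundant finite cover $\smash{\widehat R}=\bigcup_t\Gamma(I_t)$ by finite-index closed proper submodules, and then ``convert'' this cover, via Corollary~\ref{corollary:weights} and some unspecified double-counting of characters, into a non-monomial Hamming isometry $\phi,\psi\colon {}_R M\to {}_R R^n$. That conversion is not at all routine — it is precisely the hard combinatorial core of the whole counterexample, and your sketch does not actually produce $M$, $\phi$, or $\psi$, nor verify either the weight identity or the non-extendability. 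Moreover, the ``irredundancy of the cover contradicts minimality of $\mathcal L$'' step is not an argument: there is no reason a coordinate-matching forced by Lemma~\ref{lemma:bass} would shrink $\mathcal L$, since $\mathcal L$ is just the set of finite simple left ideals and has nothing to be minimal about.

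The paper takes a different and much more concrete route at exactly this point. It decomposes $S={}_R(R/\!\rad R)$ and $\soc({}_R R)$ by Artin--Wedderburn into multiplicities $\mu_i$ and $\nu_i$ of the simples $S\ol e_i$; failure of the embedding means $\nu_j>\mu_j$ for some finite simple $S\ol e_j$. Then $\soc^*({}_R R)$ contains a matrix module $A = D_j^{\mu_j\times\nu_j}$ over $\mathrm M_{\mu_j}(D_j)$, and the counterexample is supplied wholesale by Wood's theorem \cite[Thm.~4.1]{wood2}: there exist submodules $C_+,C_-\le A^n$ and a Hamming-weight-preserving isomorphism $f\colon C_+\to C_-$ with $C_+$ having a zero coordinate but $C_-$ not, so $f$ cannot be monomial. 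In other words, the paper does not re-derive the counterexample from the cover of $\smash{\widehat R}$; it reduces to the matrix-module setting where the construction is already available as a black box. If you want to follow your own almost-monotheticity route you would in effect have to reprove Wood's Theorem~4.1 in the cover language, which is a substantial undertaking and is the content that your proposal leaves missing.
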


\begin{proof} Every left Artinian, left pseudo-injective ring~$R$ with an embedding of $\soc^*(R)$ into ${}_R (R /\! \rad R)$ is left MacWilliams by Proposition~\ref{proposition:macwilliams}.  For the converse, suppose the ring~$R$ to be left Artinian and left MacWilliams.  From the extension property for codes of length~$1$ we readily infer that~$R$ is left pseudo-injective.  It remains to prove that $\mathrm{soc}^{\ast}({}_{R}R)$ embeds into ${}_{R}(R /\! \rad R)$.  To this end, let the Artin-Wedderburn decomposition of $S \defeq R /\! \rad R$ be \[ S \cong \textstyle\bigoplus\limits_{i=1}^m {\mathrm M}_{\mu_i}(D_i) \] for some positive integers $\mu_1, \ldots, \mu_m$ and division rings~$D_i$, so there is a basic set $e_1, \dots, e_m$ of idempotents in~$R$ such that ${}_R S \cong \bigoplus_{i=1}^m (S \ol e_{i})^{\mu_i}$ and each simple left $R$-module is isomorphic to some $S \ol e_i$.  Also, there are natural numbers $\nu_1, \ldots, \nu_m$ with $\soc({}_R R) \cong \bigoplus_{i=1}^{m} (S \ol e_{i})^{\nu_i}$.  Without loss of generality, we may assume that the finite modules $S \ol e_i$ are precisely $S \ol e_1, \dots, S \ol e_{\ell}$ for some $\ell \le m$.  We conclude that $\soc^{\ast}({}_R R)$ embeds into ${}_R S$ if and only if $\nu_i \le \mu_i$ for every $i \in \{ 1, \ldots, \ell \}$.  Now assuming for contradiction that $\soc^{\ast}({}_R R)$ does not embed into ${}_R S$, there exists some~$j \in \{ 1, \ldots, \ell \}$ such that $\nu_j > \mu_j$. 

As ${}_R S \ol e_j$ is isomorphic to the pull-back to~$R$ of the standard column module over ${\mathrm M}_{\mu_j}(D_j)$, we may assume that $\soc^{\ast}({}_R R)$ contains a matrix module $\smash{A \defeq D_j^{\mu_j \times \nu_j}}$ over ${\mathrm M}_{\mu_j}(D_j)$.  Wood's result~\cite[Thm.~4.1]{wood2} states the existence of left submodules $C_+, C_-$ of~$A^n$ for some positive integer~$n$ and a Hamming weight preserving isomorphism $f \colon C_+ \to C_-$ such that $C_+$ has an identically zero component while $C_-$ does not.  Of course, $C_+, C_-$ are submodules of~${}_R R^n$ and the isomorphism $f \colon C_+ \to C_-$ cannot be extended to a monomial transformation, contradicting the left MacWilliams property. \end{proof}

We conclude this note with multiple characterizations of the MacWilliams extension property for quasi-Frobenius rings. \pagebreak

\begin{cor}\label{corollary:summary} Let~$R$ be a quasi-Frobenius ring.  The following are equivalent: \begin{enumerate}
	\item[$(1)$] $R$ is finitarily Frobenius,
	\item[$(2)$] $R$ admits a finitarily left torsion-free character,
	\item[$(3)$] $\smash{\widehat R}_R$ is almost monothetic,
	\item[$(4)$] $R$ is left MacWilliams.
\end{enumerate} Also, each of (2) and (4) may be exchanged by its right version, and (3) by its left version. \end{cor}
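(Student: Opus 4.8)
The plan is to derive Corollary~\ref{corollary:summary} by assembling the equivalences already established in the previous sections, using the fact that a quasi-Frobenius ring is Artinian (indeed left and right Artinian) and both left and right pseudo-injective. First I would observe that for a quasi-Frobenius ring~$R$, being finitarily Frobenius (condition~(1)) is by Definition~\ref{def:finitely.frobenius} precisely the statement that $\soc^{\ast}({}_R R)$ embeds into ${}_R(R/\!\rad R)$ (equivalently, by Proposition~\ref{proposition:finitary.frobenius}, that $\soc^{\ast}(R_R)$ embeds into $(R/\!\rad R)_R$). With this translation in hand, the equivalence $(1)\Leftrightarrow(2)$ is immediate from Theorem~\ref{theorem:frobenius.characters} (the equivalence of its conditions~(1) and~(3), noting $R$ is left Artinian), and the equivalence $(1)\Leftrightarrow(3)$ is exactly Theorem~\ref{theorem:almost.monothetic}.

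Next I would handle $(1)\Leftrightarrow(4)$ via Theorem~\ref{theorem:main}: since $R$ is quasi-Frobenius it is left Artinian and left pseudo-injective, so the hypotheses of Theorem~\ref{theorem:main} other than the socle-embedding condition are automatically satisfied. Hence $R$ is left MacWilliams if and only if $\soc^{\ast}({}_R R)$ embeds into ${}_R(R/\!\rad R)$, i.e.\ if and only if~(1) holds. For the final remark, the right-handed versions of~(2) and~(4) and the left-handed version of~(3) are obtained symmetrically: a quasi-Frobenius ring is also right Artinian and right pseudo-injective, so applying the right analogues of Theorem~\ref{theorem:frobenius.characters}, Theorem~\ref{theorem:almost.monothetic}, and Theorem~\ref{theorem:main} (which hold by the same proofs with left and right interchanged) shows that $R$ admits a finitarily right torsion-free character iff $\soc^{\ast}(R_R)$ embeds into $(R/\!\rad R)_R$ iff ${}_R\smash{\widehat R}$ is almost monothetic iff $R$ is right MacWilliams, and by Proposition~\ref{proposition:finitary.frobenius} the condition $\soc^{\ast}(R_R) \hookrightarrow (R/\!\rad R)_R$ is equivalent to~(1).

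The only mild subtlety — and the step I would be most careful about — is making sure the ambient hypotheses line up: the cited theorems are stated for \emph{left Artinian} rings, and one must invoke the standard fact (recalled in Section~\ref{section:frobenius.rings}, cf.~\cite[Sec.~15]{lam}) that a quasi-Frobenius ring is two-sided Artinian and two-sided self-injective, hence in particular left and right pseudo-injective, so that every left-sided result used here has a valid right-sided counterpart. Beyond that bookkeeping, there is no real obstacle; the corollary is essentially a repackaging of Theorems~\ref{theorem:frobenius.characters}, \ref{theorem:almost.monothetic}, and~\ref{theorem:main} together with Definition~\ref{def:finitely.frobenius} and Proposition~\ref{proposition:finitary.frobenius}.

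Concretely, the write-up would read: ``By Definition~\ref{def:finitely.frobenius} and Proposition~\ref{proposition:finitary.frobenius}, condition~(1) is equivalent to $\soc^{\ast}({}_R R)$ embedding into ${}_R(R/\!\rad R)$. Since a quasi-Frobenius ring is left Artinian, Theorem~\ref{theorem:frobenius.characters} gives $(1)\Leftrightarrow(2)$ and Theorem~\ref{theorem:almost.monothetic} gives $(1)\Leftrightarrow(3)$. As $R$ is moreover left pseudo-injective, Theorem~\ref{theorem:main} yields $(1)\Leftrightarrow(4)$. Finally, a quasi-Frobenius ring is also right Artinian and right pseudo-injective, so the right-sided analogues of these three theorems, combined again with Proposition~\ref{proposition:finitary.frobenius}, show that the right version of~(2), the left version of~(3), and the right version of~(4) are each equivalent to~(1) as well.''
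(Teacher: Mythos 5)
Your proposal is correct and follows essentially the same route as the paper's own proof: all four equivalences are obtained by citing Theorem~\ref{theorem:frobenius.characters}, Theorem~\ref{theorem:almost.monothetic}, and Theorem~\ref{theorem:main} (noting pseudo-injectivity), and the right-sided/left-sided variants follow from the symmetry recorded in Proposition~\ref{proposition:finitary.frobenius}. The only difference is that you spell out the bookkeeping (quasi-Frobenius implies two-sided Artinian and two-sided pseudo-injective) a bit more explicitly than the paper does.
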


\begin{proof} We have $(1) \!\Longleftrightarrow\! (2)$ by Theorem~\ref{theorem:frobenius.characters}, $(1) \!\Longleftrightarrow\! (3)$ is due to Theorem~\ref{theorem:almost.monothetic}, and, noting that~$R$ is pseudo-injective, $(1) \!\Longleftrightarrow\! (4)$ according to Theorem~\ref{theorem:main}.  The last statement follows from the symmetry of (1), see Proposition~\ref{proposition:finitary.frobenius}. \end{proof}

Finally, let us state the following characterization of Artinian rings satisfying the MacWilliams property on both sides.

\begin{cor} An Artinian ring is finitarily Frobenius iff it is left and right MacWilliams. \end{cor}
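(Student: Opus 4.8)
The plan is to derive this directly from Theorem~\ref{theorem:main}, together with Proposition~\ref{prop:quasi-Frobenius.pseudo-injective} and Proposition~\ref{proposition:finitary.frobenius}, by treating the two implications separately; no new machinery is needed beyond assembling these results.

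For the forward direction, suppose that $R$ is finitarily Frobenius. Then $R$ is quasi-Frobenius, hence two-sided Artinian as well as left and right pseudo-injective, and moreover $\soc^{\ast}({}_R R)$ embeds into ${}_R (R /\! \rad R)$, while $\soc^{\ast}(R_R)$ embeds into $(R /\! \rad R)_R$ by Proposition~\ref{proposition:finitary.frobenius}. Applying Theorem~\ref{theorem:main} then yields that $R$ is left MacWilliams, and applying its right-handed analogue yields that $R$ is right MacWilliams.

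For the converse, assume that $R$ is Artinian and both left and right MacWilliams. By Theorem~\ref{theorem:main}, the ring~$R$ is left pseudo-injective and $\soc^{\ast}({}_R R)$ embeds into ${}_R (R /\! \rad R)$; by the right-handed version of Theorem~\ref{theorem:main}, $R$ is also right pseudo-injective and $\soc^{\ast}(R_R)$ embeds into $(R /\! \rad R)_R$. Since $R$ is Artinian and both left and right pseudo-injective, Proposition~\ref{prop:quasi-Frobenius.pseudo-injective} shows that $R$ is quasi-Frobenius; combining this with the embedding of $\soc^{\ast}({}_R R)$, we conclude by Definition~\ref{def:finitely.frobenius} that $R$ is finitarily Frobenius.

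The only point that requires attention is the reading of the word ``Artinian'' in the statement: to invoke both the left and the right version of Theorem~\ref{theorem:main}, and of Proposition~\ref{prop:quasi-Frobenius.pseudo-injective}, one wants $R$ to be two-sided Artinian---which is exactly the convention already in force for Proposition~\ref{prop:quasi-Frobenius.pseudo-injective}, and which is in any case automatic once $R$ is known to be quasi-Frobenius. Beyond this bookkeeping there is no real obstacle; the corollary is a straightforward bilateral packaging of Theorem~\ref{theorem:main}.
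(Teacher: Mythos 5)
Your proof is correct and follows exactly the route the paper intends: both directions are assembled from Theorem~\ref{theorem:main} (applied on both sides), Proposition~\ref{prop:quasi-Frobenius.pseudo-injective}, and the equivalence in Definition~\ref{def:finitely.frobenius} via Proposition~\ref{proposition:finitary.frobenius}. Your remark about reading ``Artinian'' as two-sided Artinian is a sensible clarification of what the paper leaves implicit, but otherwise this is the same argument, merely spelled out.
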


\begin{proof} This follows at once from Proposition~\ref{prop:quasi-Frobenius.pseudo-injective} and Theorem~\ref{theorem:main}. \end{proof} 

A natural open question now arises in light of the results presented in this paper.

\begin{quest} Suppose that~$R$ is an Artinian ring which is left MacWilliams.  Does it follow that~$R$ is quasi-Frobenius, i.e., is the ring~$R$ also right MacWilliams? \end{quest}

\appendix

\section{Abstract harmonic analysis}\label{section:pontryagin}

In this appendix, we shortly recollect some basics of abstract harmonic analysis: Pontryagin duality, Bohr approximation, and Haar integration.  For more on this, we refer to~\cite{DeitmarEchterhoff}.

Consider the circle group~$\mathbb{T} \defeq \{ z \in \mathbb C \mid \vert z \vert = 1 \}$, which is isomorphic to the quotient~$\mathbb R / \mathbb Z$.  Notice that the group~$\mathbb T$ is written multiplicatively, whereas the group $\mathbb R / \mathbb Z$ additively. Let~$G$ be a locally compact abelian group. We denote by $\smash{\widehat G}$ the \emph{dual group} of~$G$, i.e., the topological group of all continuous homomorphisms from~$G$ into~$\mathbb{T}$ endowed with the compact-open topology, which constitutes a locally compact abelian group itself. As usual, the elements of~$\smash{\widehat{G}}$ are called \emph{characters} on $G$.

\begin{thm}[Pontryagin]\label{theorem:pontryagin} If~$G$ is a locally compact abelian group, then the map \begin{displaymath}
	\eta_G \colon G \to \smash{\widehat{\widehat{G}}}, \quad g \mapsto (\gamma \mapsto \gamma (g))
	\end{displaymath} establishes an isomorphism of topological groups. \end{thm}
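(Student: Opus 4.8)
Since \ref{theorem:pontryagin} is the classical Pontryagin duality theorem, within the scope of the present note it suffices to appeal to \cite{DeitmarEchterhoff}; nevertheless, let me indicate how one would prove it. The plan is to show that $\eta_G$ is (a) a well-defined continuous homomorphism, (b) a homeomorphism onto its image, and (c) surjective.

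For (a), one uses that $G$ is locally compact: this makes the evaluation map $G \times \smash{\widehat G} \to \mathbb T$, $(g,\gamma) \mapsto \gamma(g)$, jointly continuous for the compact-open topology, so each $\eta_G(g)$ is a genuine continuous character of $\smash{\widehat G}$ and $g \mapsto \eta_G(g)$ is continuous into $\smash{\widehat{\widehat G}}$. For the injectivity half of (b) one needs that $\smash{\widehat G}$ separates the points of $G$; this is supplied by the Gelfand theory of the commutative Banach $\ast$-algebra $L^1(G)$, whose character space is identified with $\smash{\widehat G}$, together with Bochner's theorem on positive-definite functions. That $\eta_G$ is open onto its image follows by a direct comparison of compact neighbourhoods in $G$ and in $\smash{\widehat{\widehat G}}$. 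Since $G$ is locally compact, $\eta_G(G)$ is then a locally compact, hence closed, subgroup of $\smash{\widehat{\widehat G}}$.

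The substantive step — and the main obstacle — is (c). Granting (b), $\eta_G(G)$ is a closed subgroup of $\smash{\widehat{\widehat G}}$, so it suffices to prove it is dense. For this one invokes the Fourier inversion theorem: with compatibly normalized Haar measures on $G$ and $\smash{\widehat G}$, one has $f(x) = \int \hat f(\gamma)\,\gamma(x)\,d\mu_{\widehat G}(\gamma)$ for all $f$ in the Fourier algebra of $G$; writing $\gamma(x) = \eta_G(x)(\gamma)$, this exhibits $f$ as the Fourier transform $\mathcal{F}_{\widehat G}\hat f$ of $\hat f \in L^1(\smash{\widehat G})$ evaluated along $\eta_G(G)$. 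A standard argument — using that such transforms are dense in $C_0(\smash{\widehat{\widehat G}})$ by the Stone–Weierstrass theorem — then forces $\eta_G(G)$ to be dense. Being both closed and dense, $\eta_G(G) = \smash{\widehat{\widehat G}}$, and together with (b) this makes $\eta_G$ an isomorphism of topological groups.

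The only genuinely nontrivial ingredients here are Bochner's theorem and the Fourier inversion and Plancherel theorems; proving these is the analytic heart of the matter. An alternative, more structural route would avoid inversion: first verify the duality directly for $\mathbb R$, $\mathbb Z$, $\mathbb T$, finite abelian groups and compact abelian groups, observe that it is compatible with finite direct products, and then propagate it along the short exact sequence $0 \to G_0 \to G \to G/G_0 \to 0$ — where, by the structure theorem for locally compact abelian groups, $G_0 \cong \mathbb R^n \times K$ is an open subgroup with $K$ compact and $G/G_0$ discrete — via a five-lemma argument using the naturality of $\eta$ and the exactness of the duality functor.
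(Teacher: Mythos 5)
The paper does not prove Theorem~\ref{theorem:pontryagin}: it is stated in the appendix as a classical black box, with a pointer to~\cite{DeitmarEchterhoff}. Your opening observation that, for the purposes of this note, a citation suffices is therefore exactly what the authors do, and there is no in-paper argument to compare against.

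As a sketch of the classical analytic proof, your outline has the right shape: joint continuity of the evaluation map gives well-definedness and continuity of $\eta_G$; separation of points by $\widehat G$ (Gelfand--Raikov via Bochner for the abelian case) gives injectivity; openness onto the image plus the fact that a locally compact subgroup is closed reduces surjectivity to density. One point deserves a caveat, though. In the density step you argue that Fourier transforms of $L^1(\widehat G)$ functions are dense in $C_0(\widehat{\widehat G})$ by Stone--Weierstrass and that this ``forces $\eta_G(G)$ to be dense.'' Density of a subalgebra in $C_0$ does not by itself rule out a proper closed subgroup on which every member of the subalgebra could still be nonzero; what the standard proof actually uses is the \emph{regularity} of the Fourier algebra (for any compact $K$ and open $U \supseteq K$ there is $g \in L^1(\widehat G)$ with $\widehat g \equiv 1$ on $K$ and $\widehat g \equiv 0$ off $U$), combined with the $L^1$-uniqueness theorem: if $\eta_G(G)$ missed an open set, regularity would produce a nonzero $\widehat g$ vanishing on $\eta_G(G)$, hence $\int g(\gamma)\gamma(x)\,d\mu_{\widehat G}(\gamma)=0$ for all $x\in G$, forcing $g=0$, a contradiction. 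Both regularity and uniqueness rest on the inversion theorem, so your list of analytic prerequisites is right; only the bridging step as phrased is too weak. Your alternative structural route (verify duality for $\mathbb R$, $\mathbb Z$, $\mathbb T$, finite and compact groups, then propagate along $0 \to \mathbb R^n \times K \to G \to G/G_0 \to 0$ using exactness of the dual functor and the five lemma) is also a standard and fully workable path, and is closer to the treatment in several textbooks.
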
 

If $\phi \colon G \to H$ is a continuous homomorphism between locally compact abelian groups, then the map $\smash{\widehat \phi \colon \widehat H \to \widehat G}$, $\beta \mapsto \beta \circ \phi$ defines again a continuous homomorphism.  The assignment $\smash{G \mapsto \widehat G}$ becomes this way a functor from the category of locally compact abelian groups (with continuous homomorphisms as morphisms) into itself, and Theorem~\ref{theorem:pontryagin} actually provides a natural equivalence between a locally compact abelian group and its bidual.

We are going to recollect some bits about annihilating subgroups. Let $G$ be a locally compact abelian group. For subsets $A \subseteq G$ and $B \subseteq \smash{\widehat{G}}$, let us define \begin{align*}
&\Gamma (A) \defeq \{ \gamma \in \smash{\widehat{G}} \mid \gamma|_A \equiv 1 \} , &\Delta (B) \defeq \bigcap \{ \ker \gamma \mid \gamma \in B \} ,
\end{align*} noting that $\Gamma (A)$ is a closed subgroup of~$\smash{\widehat{G}}$, while $\Delta (B)$ constitutes a closed subgroup of~$G$.

\begin{prop}\label{proposition:pontryagin} Let $G$ be a locally compact abelian group. Then $\Gamma$ and $\Delta$ constitute mutually inverse order-reversing bijections between the closed subgroups of $G$ and $\smash{\widehat{G}}$. Moreover, for any closed subgroups $H \leq G$ and $K \leq \smash{\widehat{G}}$, \begin{displaymath}
	\Gamma (H) \cong \smash{\widehat{G/H}} , \qquad \Delta (K) \cong \smash{\widehat{\widehat{G}/K}} ,
	\end{displaymath} wherefore $\Gamma (H)$ is finite if and only if $H$ has finite index in $G$, and $\Delta (K)$ is finite if and only if $K$ has finite index in $\smash{\widehat{G}}$. \end{prop}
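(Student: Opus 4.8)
The plan is to reduce everything to the isomorphism $\Gamma(H) \cong \widehat{G/H}$ for closed subgroups $H \le G$, together with Pontryagin duality (Theorem~\ref{theorem:pontryagin}). To establish this isomorphism, let $q \colon G \to G/H$ be the quotient map; it is a continuous open surjection and $G/H$ is again a locally compact abelian group. The induced homomorphism $\widehat q \colon \widehat{G/H} \to \widehat G$, $\beta \mapsto \beta \circ q$, is injective because $q$ is surjective, and a character $\gamma$ of $G$ lies in its image precisely when $\gamma$ kills $\ker q = H$, i.e.\ precisely when $\gamma \in \Gamma(H)$; that $\widehat q$ is moreover a homeomorphism onto $\Gamma(H)$ is the standard fact that the duality functor carries a quotient map to a closed topological embedding (see~\cite{DeitmarEchterhoff}). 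Hence $\Gamma(H) \cong \widehat{G/H}$. Applying this with $\widehat G$ in place of $G$, and using the isomorphism $\eta_G \colon G \to \widehat{\widehat G}$ to identify $\Delta(K)$ with $\Gamma_{\widehat G}(K)$ for closed $K \le \widehat G$, one obtains likewise $\Delta(K) \cong \widehat{\widehat G / K}$.

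That $\Gamma$ and $\Delta$ are order-reversing is immediate from the definitions. For closed $H \le G$ the inclusion $H \subseteq \Delta(\Gamma(H))$ is trivial; conversely, if $g \in G \setminus H$ then $q(g) \ne 0$ in $G/H$, so since the characters of a locally compact abelian group separate its points (a consequence of Theorem~\ref{theorem:pontryagin}) there is $\beta \in \widehat{G/H}$ with $\beta(q(g)) \ne 1$, and then $\beta \circ q \in \Gamma(H)$ witnesses $g \notin \Delta(\Gamma(H))$. Thus $\Delta \circ \Gamma = \mathrm{id}$ on closed subgroups of $G$. Transporting this identity through $\eta_G$ — that is, applying it with $\widehat G$ in place of $G$ and translating $\Gamma_{\widehat G}, \Delta_{\widehat G}$ back to $\Gamma, \Delta$ via Pontryagin duality — yields $\Gamma \circ \Delta = \mathrm{id}$ on closed subgroups of $\widehat G$. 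Hence $\Gamma$ and $\Delta$ are mutually inverse (order-reversing) bijections between the closed subgroups of $G$ and those of $\widehat G$.

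For the finiteness statement, recall that a locally compact abelian group is finite if and only if its dual is finite (if $\widehat A$ is finite then $A \cong \widehat{\widehat A}$ is finite, and the dual of a finite abelian group is finite). Combining this with the isomorphism of the first paragraph: $\Gamma(H)$ is finite iff $\widehat{G/H}$ is finite iff $G/H$ is finite iff $H$ has finite index in $G$; and symmetrically $\Delta(K)$ is finite iff $\widehat G / K$ is finite iff $K$ has finite index in $\widehat G$. The one genuinely nontrivial ingredient is the assertion that $\widehat q$ is a topological — not merely algebraic — isomorphism onto $\Gamma(H)$, equivalently that dualizing is exact on the sequence $0 \to H \to G \to G/H \to 0$ at the level of duals; this is a classical part of Pontryagin duality theory, and I would simply cite it rather than reprove it. Everything else is formal manipulation with $\eta_G$ and the order-reversing property.
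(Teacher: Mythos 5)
The paper does not actually prove this proposition; it is stated in the appendix as recalled background from abstract harmonic analysis, with a pointer to~\cite{DeitmarEchterhoff}, so there is no in-paper argument to compare against. Your proof is correct and is the standard one: reducing everything to the isomorphism $\Gamma(H)\cong\widehat{G/H}$, transporting the dual statements through $\eta_G$, using separation of points to get $\Delta\circ\Gamma=\mathrm{id}$ on closed subgroups, and deducing the finiteness equivalences from the fact that a locally compact abelian group is finite iff its dual is. You correctly identify the single genuinely nontrivial ingredient — that $\widehat q\colon\widehat{G/H}\to\widehat G$ is a topological (not merely algebraic) isomorphism onto $\Gamma(H)$ — and it is entirely appropriate to cite this classical fact rather than reprove it.
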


Let us briefly turn to (discrete) rings and compact modules arising via Pontryagin duality. For a left (resp., right) $R$-module~$M$, the compact dual group $\smash{\widehat M} \defeq \Hom(M, \mathbb T)$ admits a continuous right (resp., left) $R$-module structure given by \begin{displaymath}
(\chi r)(x) \defeq \chi (rx), \qquad ( \text{resp.,} \ (r \chi)(x) \defeq \chi (xr)) \qquad \big(\chi \in \widehat{M}, \, r \in R, \, x \in M \big) .
\end{displaymath} Moreover, if $\phi \colon M \to N$ is a homomorphism between left (resp., right) $R$-modules $M$ and $N$, then the continuous homomorphism $\smash{\widehat{\phi} \colon \widehat{N} \to \widehat{M}}$, $\chi \mapsto \chi \circ \phi$ is in line with the right (resp., left) $R$-module structure of $\smash{\widehat N}$ and $\smash{\widehat M}$. In particular, this construction applies to the $R$-bimodule $M = {}_R R_R$, which therefore gives rise to the compact $R$-bimodule ${}_R  \smash{\widehat R}_R$.

\begin{lem}\label{lemma:pontryagin} Let $R$ be a ring. A subgroup $A$ of a left (resp., right) $R$-module~$M$ is a submodule of $M$ if and only if $\Gamma (A)$ is a submodule of the right (resp., left) $R$-module~$\smash{\widehat{M}}$. In particular, a subgroup $I \leq R$ is a left (resp., right) ideal in $R$ if and only if $\Gamma (I)$ is a submodule of $\smash{\widehat R}_R$ (resp., $\smash{{}_R \widehat R}$). \end{lem}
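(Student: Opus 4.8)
The plan is to prove the biconditional for a left $R$-module $M$ (the right case is obtained by swapping sides throughout) and then specialise to the regular module. Recall that the right $R$-action on $\smash{\widehat M}$ is $(\chi r)(x) = \chi(rx)$, and that $M$, being a discrete group, has all of its subgroups closed, so Proposition~\ref{proposition:pontryagin} applies to every subgroup $A \le M$ and gives $\Delta(\Gamma(A)) = A$.

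For the forward implication I would argue directly. Suppose $A \le M$ is a submodule. Take $\gamma \in \Gamma(A)$ and $r \in R$; for every $a \in A$ one has $(\gamma r)(a) = \gamma(ra) = 1$ since $ra \in A$. Hence $\gamma r \in \Gamma(A)$, so $\Gamma(A)$ is a submodule of $\smash{\widehat M}_R$. This is a one-line verification. For the converse, suppose $\Gamma(A)$ is a submodule of $\smash{\widehat M}_R$ and fix $a \in A$, $r \in R$; I want $ra \in A$. If not, then $ra \notin A = \Delta(\Gamma(A)) = \bigcap \{ \ker \gamma \mid \gamma \in \Gamma(A) \}$, so there exists $\gamma \in \Gamma(A)$ with $\gamma(ra) \ne 1$; but then $(\gamma r)(a) = \gamma(ra) \ne 1$, so $\gamma r \notin \Gamma(A)$, contradicting that $\Gamma(A)$ is a submodule. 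Therefore $ra \in A$ and $A$ is a submodule of $M$. (Alternatively, the separating character $\gamma$ can be produced without the full bijection of Proposition~\ref{proposition:pontryagin}, directly from the cogenerator property of $\mathbb Q/\mathbb Z$ invoked in Lemma~\ref{lemma:torsion-free.division}, applied to the quotient $M/A$ and the nonzero class of $ra$.)

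The ``in particular'' clause is then immediate: specialising to $M = {}_R R$, whose Pontryagin dual carries the right module structure $\smash{\widehat R}_R$ recorded above, a subgroup $I \le R$ is a left ideal precisely when it is a submodule of ${}_R R$, which by the main assertion holds iff $\Gamma(I)$ is a submodule of $\smash{\widehat R}_R$; the right-ideal statement is symmetric. I do not expect a genuine obstacle here: the only points demanding care are the bookkeeping of the left/right conventions on $\smash{\widehat M}$ and the use of discreteness of $M$ to ensure that $A$ is closed, so that $\Delta \Gamma(A) = A$ is available from Proposition~\ref{proposition:pontryagin}.
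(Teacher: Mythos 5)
Your argument is correct, and in fact the paper states Lemma~\ref{lemma:pontryagin} without proof, treating it as a standard consequence of Pontryagin duality; your direct verification is exactly the argument the authors leave implicit. The forward direction is the immediate computation $(\gamma r)(a)=\gamma(ra)=1$, and for the converse you correctly invoke $\Delta(\Gamma(A))=A$ from Proposition~\ref{proposition:pontryagin} (valid because $M$ is discrete, hence every subgroup is closed) to produce a separating character $\gamma\in\Gamma(A)$ with $\gamma(ra)\neq 1$, from which $\gamma r\notin\Gamma(A)$. Your remark that the separating character can alternatively be obtained from the cogenerator property of $\mathbb{Q}/\mathbb{Z}$ is also fine, and the specialisation to $M={}_R R$ giving the ``in particular'' clause is immediate. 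No gaps.
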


Next we recall the Bohr approximation theorem. Given a compact (Hausdorff) space $X$, let us consider the commutative $C^{\ast}$-algebra $C(X)$ of all continuous complex-valued functions on $X$, equipped with the obvious point-wise operations and the supremum norm.

\begin{thm}[Bohr approximation theorem]\label{theorem:bohr} Let $G$ a compact abelian group. Then $\smash{\widehat G}$ generates a dense linear subspace of $C(G)$. \end{thm}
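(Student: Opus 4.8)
The plan is to derive the statement from the complex version of the Stone--Weierstrass theorem. Let $A$ denote the complex-linear span of $\widehat G$ inside $C(G)$. Since $G$ is a compact Hausdorff space, it suffices to check that $A$ is a point-separating, self-adjoint subalgebra of $C(G)$ containing the constant functions; Stone--Weierstrass then gives density of $A$ in the supremum norm, which is precisely the assertion.

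The algebraic closure properties are routine. The pointwise product of characters $\gamma_1,\gamma_2 \in \widehat G$ is again a continuous homomorphism $G \to \mathbb T$, hence an element of $\widehat G$; by bilinearity of multiplication, $A$ is a subalgebra of $C(G)$. It contains the trivial character, so all constant functions lie in $A$. For self-adjointness, observe that for $\gamma \in \widehat G$ and $g \in G$ one has $\overline{\gamma(g)} = \gamma(g)^{-1} = \gamma(-g)$ since $\lvert \gamma(g) \rvert = 1$; thus $\overline{\gamma} \colon g \mapsto \gamma(-g)$ belongs to $\widehat G$, and since complex conjugation is conjugate-linear it follows that $A$ is closed under complex conjugation.

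The one substantive ingredient, and the step I expect to do the real work, is that $\widehat G$ separates the points of $G$. This is exactly the injectivity of the evaluation map $\eta_G \colon G \to \smash{\widehat{\widehat G}}$, which is provided by Pontryagin's Theorem~\ref{theorem:pontryagin}: given distinct $g, h \in G$ we have $g - h \ne 0$, so $\eta_G(g - h)$ is not the trivial character of $\widehat G$, i.e.\ there is $\gamma \in \widehat G$ with $\gamma(g - h) \ne 1$, whence $\gamma(g) = \gamma(g-h)\gamma(h) \ne \gamma(h)$. (If one prefers to avoid the full duality theorem here, point-separation for compact abelian groups can alternatively be extracted from the Peter--Weyl theorem; but since Theorem~\ref{theorem:pontryagin} is already at our disposal, invoking it is the most economical route.) With $A$ now verified to be a unital, self-adjoint, point-separating subalgebra of $C(G)$, the complex Stone--Weierstrass theorem yields that $A$ is dense in $C(G)$, completing the proof. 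No delicate estimate or case distinction is involved; the content is entirely the reduction to Stone--Weierstrass together with the Pontryagin-duality input.
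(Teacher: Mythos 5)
Your proof is correct and follows essentially the same route as the paper: both deduce point-separation of $\widehat G$ from Pontryagin duality, observe that the linear span of $\widehat G$ is already a unital self-adjoint subalgebra because $\widehat G$ is closed under products and conjugation, and then invoke the Stone--Weierstrass theorem.
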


\begin{proof} By Theorem~\ref{theorem:pontryagin}, $\smash{\widehat G}$ separates the points of $G$, whence the Stone-Weierstrass theorem asserts that the $\ast$-algebra $A$ generated by $\smash{\widehat G}$ is dense in $C(G)$. But $A$ coincides with the linear subspace generated by $\smash{\widehat G}$, simply because $\smash{\widehat G}$ is closed under point-wise complex conjugation and multiplication. Hence, the theorem follows. \end{proof}

We conclude this appendix with a simple useful fact about continuous characters on compact abelian groups.  Recall that, for a compact group~$G$, there exists a unique left-invariant Radon probability measure $\mu_{G}$ on $G$, which is then necessarily right-invariant, inversion-invariant, and \emph{strictly positive}, i.e., $\mu_{G}(U) > 0$ for any non-empty open subset $U \subseteq G$.

\begin{lem}\label{lemma:characters.are.ap} Let $G$ be a compact abelian group. For every $\chi \in \smash{\widehat G}$, \begin{displaymath}
	\int \chi(x) \, d\mu_{G}(x) = \begin{cases}
	1 & \text{if } \chi = 1 , \\
	0 & \text{otherwise} .
	\end{cases}
	\end{displaymath} \end{lem}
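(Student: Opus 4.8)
The plan is to invoke the classical averaging trick for characters, using only the translation invariance and total mass of the Haar probability measure $\mu_G$. Write $I \defeq \int \chi(x)\, d\mu_G(x)$; this is well defined since $\chi$ is continuous and bounded (it takes values in the compact set $\mathbb T$) on the compact group $G$.

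First I would dispose of the trivial case: if $\chi = 1$, then $I = \int 1 \, d\mu_G(x) = \mu_G(G) = 1$, since $\mu_G$ is a probability measure by the choice recalled just before the statement. For the remaining case, suppose $\chi \ne 1$ and pick $y \in G$ with $\chi(y) \ne 1$. Using that $\chi$ is a homomorphism, so $\chi(xy) = \chi(x)\chi(y)$ for all $x$, together with the left-invariance of $\mu_G$ under the translation $x \mapsto xy$, I compute
\[
I = \int \chi(xy)\, d\mu_G(x) = \chi(y) \int \chi(x)\, d\mu_G(x) = \chi(y)\, I .
\]
Hence $(1 - \chi(y))\, I = 0$, and since $\chi(y) \ne 1$ this forces $I = 0$, which is exactly the claimed value for $\chi \ne 1$.

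There is essentially no obstacle here — the only points that need a word are that the integral is over a compact group with finite (unit) mass so everything converges, and that we are entitled to change variables by a group translation, which is precisely the invariance property of $\mu_G$ recalled in the paragraph preceding the lemma. (If $G$ were written additively one would translate by $+y$ instead; the argument is identical.) This completes the proof.
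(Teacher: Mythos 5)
Your proof is correct and is essentially the same as the paper's: both dispose of the trivial case $\chi=1$ directly, and for $\chi\ne 1$ use the translation invariance of $\mu_G$ together with the homomorphism property of $\chi$ to derive $I=\chi(g)I$ and conclude $I=0$. The only cosmetic difference is that you translate on the right ($x\mapsto xy$) while the paper translates on the left ($x\mapsto gx$), which is immaterial since $G$ is abelian.
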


\begin{proof} Clearly, $\int 1 \, d\mu_{G}(x) = 1$. Suppose that $\chi \ne 1$. If $g \in G$ such that $\chi (g) \ne 1$, then \begin{displaymath}
	\int \chi(x) \, d\mu_{G}(x) = \int \chi(gx) \, d\mu_{G}(x) = \int \chi (g) \chi(x) \, d\mu_{G}(x) = \chi(g) \int \chi(x) \, d\mu_{G}(x) ,
	\end{displaymath} which implies that $\int \chi(x) \, d\mu_{G}(x) = 0$. This completes the proof. \end{proof}

\section*{Acknowledgments}

The authors owe special thanks to Tom Hanika for providing a creative and cheerful atmosphere during the final stage of this work, as well as to the referee for a very careful reading of this manuscript and valuable suggestions to improve it.


\end{document}